\documentclass{amsart}
\usepackage{amsfonts}
\usepackage{latexsym}
\usepackage{amssymb}
\usepackage{amsmath}
\usepackage{todonotes}
\usepackage{enumerate}

%%%%%%%%%%%%%%%%%%%%%%%%%%%%%%%%%%%%%%%%%%%%%%

\newcommand{\R}{\mathbb R}

\newcommand{\conv}{\mathrm{conv}}

%%%%%%%%%%%%%%%%%%%%%%%%%%%%%%%%%%%%%%%%%%%%%

\newtheorem{thm}{Theorem}[section]
\newtheorem{lemma}[thm]{Lemma}

\newtheorem{cor}[thm]{Corollary}

\newtheorem{df}{Definition}[section]
\theoremstyle{remark}
\newtheorem*{rmk}{Remark}

%%%%%%%%%%%%%%%%%%%%%%%%%%%%%%%%%%%%%%%%%%%%%

\begin{document}

%%%%%%%%%%%%%%%%%%%%%%%%%%%%%%%%%%%%%%%%%%%%%5

\title{Rogers-Shephard inequality for log-concave functions}

\author[David Alonso, Bernardo Gonz\'alez, C. Hugo Jim\'enez, Rafael Villa]{David Alonso-Guti\'{e}rrez, Bernardo Gonz\'alez Merino, C. Hugo Jim\'enez, Rafael Villa}

\email{alonsod@unizar.es}
\email{bg.merino@tum.de}
\email{hugojimenez@mat.puc-rio.br}
\email{villa@us.es}
\address{Universidad de Zaragoza}
\address{Technische Universit\"at M\"unchen}
\address{Pontif\'icia Universidade Cat\'olica do Rio de Janeiro}
\address{Universidad de Sevilla}

\subjclass[2010]{Primary 52A20, Secondary 39B62,46N10}

\keywords{Rogers-Shephard inequality, log-concave measures, log-concave functions, convolution body, geometric inequalities}

\date{\today}
\begin{abstract}
In this paper we prove different functional inequalities extending the classical Rogers-Shephard inequalities for convex bodies. The original inequalities provide an optimal relation between the volume of a convex body and the volume of several symmetrizations of the body, such as, its difference body. We characterize the equality cases in all these inequalities. Our method is based on the extension of the notion of a convolution body of two convex sets to
any pair of log-concave functions and the study of some geometrical properties of these new sets.
\end{abstract}
\maketitle

\section{Introduction}

A measure $\mu$ on $\R^n$ is log-concave if for any measurable sets $A,B\subset\R^n$ and $0<\lambda<1$,
$$
\mu(\lambda A+(1-\lambda)B)
\ge
\mu(A)^\lambda \mu(B)^{1-\lambda}
$$
whenever $A,B\subset\R^n$ and $\lambda A+(1-\lambda)B$ are measurable, where $A+B =\{a+b: a \in A, b\in B\}$ is the Minkowski sum.

Log-concave measures naturally appear in Convex Geometry, since the Brunn–
Minkowski inequality establishes the log-concavity of the Lebesgue measure restricted to convex sets, and of the marginal sections of convex sets.

A function $f :\R^n\to [0,+\infty)$ is
log-concave if $f(x)=e^{-u(x)}$ for some convex function $u:\R^n\to(-\infty,\infty]$. As was shown in \cite{Bo}, a measure $\mu$ on $\R^n$ with full-dimensional support is log-concave if and
only if it has a log-concave density with respect to the Lebesgue measure.

The class of log-concave functions has proven to be of great importance in several areas or mathematics. From a functional point of view it has been shown they resemble Gaussian functions in many different ways. Many functional inequalities satisfied by Gaussian functions, like Poincare and Log-Sobolev inequalities,  also hold in a more general subclass of log-concave functions \cite{BBCG,BE,B}.
They also appear in areas as Information Theory,
in the study of
some  important parameters, such as the classical entropy \cite{BM1}.
There are many examples in the literature of functional inequalities with a geometric counterpart; Prekopa-Leindler/Brunn-Minkowski \cite{P} and Sobolev/Petty projection \cite{Z} inequalities are two of the main examples. This has generated an increasing interest to extend several important parameters of convex bodies to functional parameters \cite{AKM,AKSW,B,FM,KM,M} in the class of log-concave functions.

The class of  log-concave functions is often regarded as the natural extension of convex bodies,
 taking into account that the characteristic function of a convex body is a log-concave function and that this is the smallest closed under limits class of functions that contains the densities of $n$-dimensional marginals of uniform probabilities on convex bodies of higher dimension (we refer to the next section for precise definitions).

In this work we extend Rogers-Shephard inequality \cite{RS,RS2} to the class of log-concave functions (Theorems \ref{RSfunctions} and \ref{thm:RSsurfacefunctions}), characterizing the equality cases as well. We provide  other functional versions of inequalities around Rogers-Shephard's with their respective characterization of equality cases. While the Brunn-Minkowski inequality is commonly seen as the backbone of modern Convex Geometry, Rogers-Shephard inequality can be considered as a reverse form of Brunn-Minkowski inequality that not only describes a relation between Minkowski addition and  volume, but also deals with yet another fundamental property in convexity: symmetry. The far reaching influence of this inequality becomes evident as it can be found as an ingredient not only in many important works in classical and asymptotic convex geometry \cite{Be,Ku,Kl,Pa} but also in many others with a more analytical flavor \cite{R,ASY,SWZ,MP} and its extension to a functional setting as well as for the entropy of convex measures has already been considered for instance in \cite{C} or \cite{BM2}.

The paper is organised as follows: In Section \ref{notationprevresults} we provide the notation
used in the rest of the paper and some previous results and state the precise results we are going to prove. In Section \ref{ProofInequalities} we introduce the $(\theta,t)$-convolution bodies of log-concave functions and prove the extension of Rogers-Shephard inequalities (\ref{eq:RSfunctions}) and (\ref{eq:f=-g}). In Section \ref{ProofInequalities2} we introduce the more general concept of $k$-th $(\theta,t)$-convolution bodies and prove a Rogers-Shephard type inequality (\ref{eq:RSsurfacefunctions}) for surface area. When particularizing to $k=n$ we obtain the previously introduced $(\theta,t)$-convolution bodies.
In Section \ref{EqualityCases} we characterize the equality cases in these inequalities. Finally, in Section \ref{Colesanti}, we revisit another result around Rogers-Shephard inequality \cite{C} by giving an extended version for two different functions. Thus, we extend inequality (\ref{RSColesanti}) for any two log-concave functions and characterize the equality cases. Since this inequality will strengthen another well known Rogers-Shephard inequality (\ref{RSuniontwobodies}) we will make use of these new tools to characterize the equality cases in inequality (\ref{RSuniontwobodies}).

\section{Notation and previous results}\label{notationprevresults}

The notation used in this paper is quite standard in modern convex geometry and consistent with for example \cite{S} and \cite{G}. A convex body is a subset of $\R^n$ that is convex, compact and has non-empty interior. It is said to be centrally symmetric if for any $x\in K$ we have that also $-x\in K$. When studying geometric properties of a convex body $K$ it is usually very convenient to construct another convex body from $K$ which is centrally symmetric. There are many ways to construct such a symmetrization. One of them is the so called difference body of $K$, which is the Minkowski sum of $K$ and $-K$. Let us recall that the Minkowski sum of two convex bodies $K$ and $L$ is defined as
\begin{align*}
K+L
=&
\{x+y\in\R^n\,:\,x\in K,y\in L\}
\\
=&
\{x\in\R^n\,:\,K\cap(x-L)\neq\emptyset\}.
\end{align*}

Brunn-Minkowski inequality (see, for instance, \cite{AGM} for several proofs and characterization of the equality cases) gives the following lower bound for the volume of the Minkowski sum of any two convex bodies $K,L\subseteq\R^n$:
$$
|K+L|^\frac{1}{n}\geq |K|^\frac{1}{n}+|L|^\frac{1}{n}.
$$

As a consequence of Brunn-Minkowski inequality the following relation between the volume of the difference body $K-K$ and the volume of $K$ is always true
$$
|K-K|\geq 2^n|K|,
$$
with equality if and only if $K$ is symmetric. In \cite{RS} Rogers and Shephard proved a reverse inequality. Namely,
Rogers-Shephard inequality states that for any  convex body $K\subseteq\R^n$, we have
\begin{equation}\label{RSonebody}
|K-K|\leq{2n\choose n}|K|,
\end{equation}
with equality if and only if $K$ is a simplex. This inequality was extended to any pair of convex bodies $K,L\subseteq\R^n$ in \cite{RS2}, showing that
\begin{equation}\label{RStwobodies}
\max_{x_0\in\R^n}|K\cap(x_0-L)||K+L|\leq{2n\choose n}|K||L|,
\end{equation}
with equality if and only if $K=-L$ is a simplex  (see \cite{AJV} for the characterization of equality).

In the same paper \cite{RS2}
the authors also considered different types of symmetrization of a convex body $K$ and proved volume inequalities for them. In particular it was shown that for any convex body $K\subseteq\R^n$ containing 0, the volume of the convex hull of $K$ and $-K$ verifies
\begin{equation}\label{RSunion}
|\conv\{K, -K\}|\leq 2^n|K|
\end{equation}
with equality if and only if $K$ is a simplex and $0$ is one of its vertices. In the same paper the authors remarked that, with a similar proof, the latter inequality can be extended to the following inequality for any two bodies $K$ and $L$ containing the origin
\begin{equation}\label{RSuniontwobodies}
|K\cap L||\conv\{K, -L\}|\leq 2^n|K||L|
\end{equation}
and they suggested that it is likely that equality is attained if and only if $K=L$ is a simplex and $0$ is one of its vertices.

Very recently, in \cite{AJV}, the volume of the $\theta$-convolution bodies $K+_\theta L$ was studied, where
\begin{equation}\label{ConvolutionBodies}
K+_\theta L=\{x\in K+L\,:\,|K\cap(x-L)|\geq \theta\max_{z\in\R^n}|K\cap(z-L)|\}.
\end{equation}
As a consequence of the volume inequalities obtained for convolution bodies, inequality (\ref{RStwobodies}) was recovered and the equality cases were characterized.

In \cite{AGJ}, similar inclusion relations and volume inequalities were obtained for the $h,\theta$-convolution bodies of $K$ and $L$, defined as
$$
K+_{h,\theta}L=\{x\in K+L\,:\,h(K\cap(x-L))\geq \theta\max_{z\in\R^n}h(K\cap(z-L))\},
$$
where $h$ is a function satisfying some properties. As a particular case we have the $k$-th $\theta$-convolution bodies of two convex bodies, defined as
$$
K+_{k,\theta} L=\{x\in K+L\,:\,W_{n-k}(K\cap(x-L))\geq \theta\max_{z\in\R^n}W_{n-k}(K\cap(z-L))\},
$$
where $W_{n-k}$ denotes the $(n-k)$-th querma\ss integral of a convex body which, by Kubota's formula (cf. \cite[p. 295]{S}),
can be expressed as an average of the volumes of the $k$-dimensional projections of $K$
$$
W_{n-k}(K)=\frac{|B_2^n|}{|B_2^k|}\int_{G_{n,k}}|P_E(K)|d\mu(E).
$$
($G_{n,k}$ denotes the set of $k$-dimensional linear subspaces, $d\mu$ is the Haar probability measure on $G_{n,k}$ and $P_E(K)$ is the projection of $K$ on a subspace $E$). As a consequence of these volume inequalities the following Rogers-Shephard type inequality for any two convex bodies $K,L\subseteq\R^n$, which involves the surface area of $K$ and $L$, is obtained
\begin{equation}\label{RSsurface}
|K+L|\leq{2n\choose n}\frac{|K||\partial L|+|L||\partial K|}{2\max_{x_0\in\R^n}|\partial(K\cap(x_0-L))|},
\end{equation}
where  $|\partial K|$ is the surface area of $K$. Notice that when $L=-K$ we recover inequality (\ref{RSonebody}). Let us recall that, up to a constant which depends only on the dimension $n$, the surface area of a convex body $K$ equals the querma\ss integral $W_1(K)$.

Inequality (\ref{RSunion}) was extended to the context of log-concave functions  in \cite{C}, where the author proved that for any log-concave function $f$, if its difference function is defined by
$$
\Delta f(z)=\sup
\left\{
\sqrt{f(x)f(-y)}\,:\,x,y\in\R^n\,:2z=x+y
\right\},
$$
then
\begin{equation}\label{RSColesanti}
\int_{\R^n}\Delta f(x)dx\leq 2^n\int_{\R^n}f(x)dx.
\end{equation}
Taking $f(x)=e^{-h_{K^\circ}(x)}$, with $h_{K^\circ}(x)=\max_{y\in K^\circ}\langle x,y\rangle$ the support function of the polar set of a convex body $K$ containing the origin,  inequality (\ref{RSunion}) is recovered. This inequality was also extended in \cite{AEFO}.

In this paper we extend inequalities (\ref{RSonebody}), (\ref{RStwobodies}) and (\ref{RSsurface}) to log-concave functions. Before we state our results we need to introduce some more notation.

Given $f,g$ two log-concave functions, their convolution  defined by
$$
f*g(x)=\int_{\R^n}f(z)g(x-z)dz
$$
 is also a log-concave function in $\R^n$. If $f(x)=\chi_K(x)$ and $g(x)=\chi_L(x)$ are the characteristic functions of two convex bodies, then $f*g(x)=|K\cap(x-L)|$.

The Asplund product of two log-concave functions is defined by
$$
f\star g(x)=\sup_{z\in\R^n}f(z)g(x-z).
$$
If $f(x)=\chi_K(x)$ and $g(x)=\chi_L(x)$, then $f\star g(x)=\chi_{K+L}(x)$. This operation is the natural extension of the Minkowski sum of convex bodies,
as it has been shown when extending geometric inequalities to the context of general log-concave functions (see for instance \cite{AKM}).

\begin{rmk}\label{maximumAsplund}
Notice that if both $f$ and $g$ are integrable and continuous when restricted to their supports then this supremum is a maximum since, in such case, if $f\star g(x)=0$, then for any $z\in\R^n$ we have that $f(z)g(x-z)=0$ and if $f\star g(x)>0$, then there exists a $t>0$ such that the set
$$
\mathcal{A}_{t}(x):=\{z\in\textrm{supp } f\cap (x-\textrm{supp }g)\,:\,f(z)g(x-z)\geq t\Vert f\Vert_\infty\Vert g\Vert_\infty\}
$$
is not empty. Since $f$ and $g$ are integrable log-concave functions this set is convex and bounded. Thus, its closure is a compact convex set. Since both $f$ and $g$ are continuous when restricted to their supports and
$$
f\star g(x)=\sup_{z\in\R^n}f(z)g(x-z)=\sup_{z\in\textrm{cl}(\mathcal{A}_{t}(x))}f(z)g(x-z)
$$
the function $f(z)g(x-z)$ is continuous on the compact set $\mathcal{A}_{t}(x)$ and the maximum is attained.
\end{rmk}

With this notation, we  prove the following extension of inequality (\ref{RStwobodies}).
\begin{thm}\label{RSfunctions}
Let $f,g:\R^n\to\R$ be two integrable log-concave functions with full-dimensional support such that $f$ and $g$ are continuous when restricted to their supports. Then
\begin{equation}\label{eq:RSfunctions}
\Vert f*g\Vert_\infty\int_{\R^n}f\star g(x)dx\leq{2n\choose n}\Vert f\Vert_\infty\Vert g\Vert_\infty\int_{\R^n}f(x)dx\int_{\R^n}g(x)dx.
\end{equation}
Furthermore, this inequality becomes an equality if and only if $\frac{f(x)}{\Vert f\Vert_\infty}=\frac{g(-x)}{\Vert g\Vert_\infty}$ is the characteristic function of an $n$-dimensional simplex.
\end{thm}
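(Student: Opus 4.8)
The strategy is to reduce the functional inequality to the classical Rogers--Shephard inequality (\ref{RStwobodies}) by slicing both sides along level sets and integrating. Since $f$ and $g$ are log-concave, write $f(x)=\|f\|_\infty\,\chi_{\{f\ge \|f\|_\infty s\}}$-type decompositions via the layer-cake formula: for any log-concave $f$,
\begin{equation*}
\int_{\R^n} f(x)\,dx = \|f\|_\infty\int_0^1 |K_s(f)|\,ds,\qquad K_s(f):=\{x:f(x)\ge s\|f\|_\infty\},
\end{equation*}
and each $K_s(f)$ is a convex body (for $0<s<1$). The key observation is that the Asplund product $f\star g$ has level sets that behave additively, namely $\{f\star g \ge s\|f\|_\infty\|g\|_\infty\}\supseteq K_s(f)+K_s(g)$ (in fact one gets a clean description of the level sets of $f\star g$ in terms of those of $f$ and $g$), while the ordinary convolution $f*g$ satisfies $\|f*g\|_\infty = \max_{x_0}(f*g)(x_0)$, which for the normalized slices should relate to $\max_{x_0}|K_s(f)\cap(x_0-K_s(g))|$.

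First I would normalize so that $\|f\|_\infty=\|g\|_\infty=1$, which costs exactly the constants appearing on the right-hand side of (\ref{eq:RSfunctions}). Then I would express $\|f*g\|_\infty$ and $\int f\star g$ through the superlevel sets: the multiplicative structure of log-concavity means $K_s(f)\cap(x-K_s(g))$ is precisely the superlevel set $\{z: f(z)g(x-z)\ge s\}$ when the two functions are "aligned," and the maximum of the convolution integral is governed by these sections. The heart of the argument is a one-parameter version of (\ref{RStwobodies}): for each fixed $s\in(0,1)$ apply the classical Rogers--Shephard inequality to the pair $K_s(f),K_s(g)$, obtaining
\begin{equation*}
\max_{x_0}|K_s(f)\cap(x_0-K_s(g))|\,|K_s(f)+K_s(g)|\le \binom{2n}{n}|K_s(f)||K_s(g)|,
\end{equation*}
and then integrate in $s$. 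The delicate point is matching the integrated quantities to $\|f*g\|_\infty$, $\int f\star g$, $\int f$, and $\int g$ on the nose; this requires a Fubini/coarea computation converting the $s$-integrals of the slice functionals back into the original functional norms.

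The main obstacle I anticipate is the interchange between the supremum defining $\|f*g\|_\infty=\max_{x_0}\int f(z)g(x_0-z)\,dz$ and the level-set slicing: a single $x_0$ maximizing the convolution need not simultaneously maximize every sectional volume $|K_s(f)\cap(x_0-K_s(g))|$, so the naive slice-by-slice bound may not assemble correctly. The resolution should come from Remark~\ref{maximumAsplund}, which guarantees the relevant maxima are attained on compact convex sets, together with the log-concavity of the function $x_0\mapsto (f*g)(x_0)$; one must argue that the level sets of $f$ and $g$ can be taken to share a common optimal translation, or equivalently reduce to the case where the maximizing $x_0$ is the origin by a translation that does not affect any of the four functionals. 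Once alignment is arranged, the slice inequalities integrate cleanly.

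For the equality characterization, I would trace back through the integrated inequality: equality forces the classical Rogers--Shephard equality for $K_s(f)$ and $K_s(g)$ for almost every $s$, hence each $K_s(f)=-K_s(g)$ is a simplex by the characterization in (\ref{RStwobodies}) (see \cite{AJV}). The family $\{K_s(f)\}_s$ being a nested family of simplices, together with log-concavity of $f$, should force $f/\|f\|_\infty$ to be the characteristic function of a single simplex (the superlevel sets cannot vary continuously as genuine simplices while remaining log-concave unless the function is a characteristic function), and the relation $K_s(f)=-K_s(g)$ then yields $f(x)/\|f\|_\infty=g(-x)/\|g\|_\infty=\chi_\Delta(x)$. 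The careful step here is ruling out non-characteristic log-concave $f$ with simplex level sets, which I expect to handle by examining how the simplices $K_s(f)$ must scale in $s$ and showing the only consistent log-concave profile is the indicator.
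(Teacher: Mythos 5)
Your plan founders precisely at the step you flag as ``delicate'': the integrated quantities cannot be matched, and the obstruction is structural, not technical. Normalize $\Vert f\Vert_\infty=\Vert g\Vert_\infty=1$ and apply \eqref{RStwobodies} to the pair $K_s(f),K_s(g)$ for each $s$; integrating in $s$, the best possible right-hand side is $\binom{2n}{n}\int_0^1|K_s(f)||K_s(g)|\,ds$. But the theorem requires $\binom{2n}{n}\int_0^1|K_s(f)|\,ds\int_0^1|K_s(g)|\,ds$, and since $s\mapsto|K_s(f)|$ and $s\mapsto|K_s(g)|$ are both non-increasing, Chebyshev's integral inequality gives
\[
\int_0^1|K_s(f)||K_s(g)|\,ds\;\geq\;\int_0^1|K_s(f)|\,ds\int_0^1|K_s(g)|\,ds=\int_{\R^n}f\int_{\R^n}g,
\]
i.e.\ the comparison goes the wrong way, with strict inequality unless one of the slice functions is essentially constant (a characteristic function). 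Concretely, for $f(x)=g(x)=e^{-|x|}$ (Euclidean norm) one has $|K_s|=|B_2^n|(\log(1/s))^n$, hence $\int_0^1|K_s|^2\,ds=|B_2^n|^2(2n)!=\binom{2n}{n}\bigl(\int_0^1|K_s|\,ds\bigr)^2$, so the slicewise bound loses a full factor $\binom{2n}{n}$; no Fubini/coarea rearrangement can recover a product of integrals from an integral of products. (A smaller symptom of the same issue: your inclusion for the Asplund product is wrong as stated; levels mix multiplicatively, so $K_s(f)+K_s(g)\subseteq\{f\star g\geq s^2\}$, not $\{f\star g\geq s\}$.) This is exactly why the paper does not slice $f$ and $g$ separately: it works with $\mathcal{A}_t(x)=\{z:\,f(z)g(x-z)\geq t\}$, the level sets of the integrand of $f*g$, so that $\int_0^1|\mathcal{A}_t(x)|\,dt=f*g(x)$ and Fubini produces $\int f\int g$ exactly; the constant $\binom{2n}{n}$ then comes not from citing \eqref{RStwobodies} slicewise (the sets $\mathcal{A}_t(x)$ and $\mathcal{C}_{\theta,t}$ are not intersections or sums of level sets of $f$ and $g$, so the classical theorem does not apply to them) but from integrating in $\theta$ the Brunn--Minkowski-based inclusion $(1-\theta^{1/n})(\mathcal{C}_{0,t}-x_0)\subseteq\mathcal{C}_{\theta,t}-x_0$ of Corollary \ref{inclusionrelation}; finally the left-hand side is factored into $\Vert f*g\Vert_\infty\int f\star g$ via the elementary bound $\min\{a,b\}\geq ab$ for $a,b\in[0,1]$, which also disposes of your ``alignment'' worry about the maximizing $x_0$.

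The equality sketch has an independent gap: a nested family of simplicial level sets with $K_s(f)=-K_s(g)$ does \emph{not} force $f$ to be a characteristic function. Take $f(x)=g(-x)=e^{-p(x)}$ where $p$ is the Minkowski gauge of a fixed $n$-dimensional simplex $\Delta$ with $0$ in its interior: then every superlevel set $K_s(f)=\log(1/s)\Delta$ is a simplex and $K_s(f)=-K_s(g)$ for all $s$, yet equality fails in \eqref{eq:RSfunctions}. So ``examining how the simplices scale'' cannot close the argument; one needs the extra rigidity that the paper extracts from equality in the inclusion relations -- in the one-function case, that the top level set $\mathcal{A}_1(f)(0)$ contains a facet of every $\mathcal{A}_t(f)(0)$, and in the two-function case a separate argument forcing one of $f,g$ to be a characteristic function -- none of which is visible from slicewise Rogers--Shephard equality alone.
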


In case we consider $g(x)=f(-x)$ the latter inequality can be improved to the following extension of inequality (\ref{RSonebody}):

\begin{thm}\label{thm:g=f-}
Let $f$ be a log-concave function with full-dimensional support and continuous when restricted to it and let $\bar f(x)=f(-x)$. Then
\begin{equation}\label{eq:f=-g}
\int_{\R^n}f\star \bar f(x)dx\leq{2n\choose n}\Vert
f\Vert_\infty\int_{\R^n} f(x)dx.
\end{equation}
Furthermore, this inequality becomes an equality if and only if $\frac{f(x)}{\Vert f\Vert_\infty}$ is the characteristic function of an $n$-dimensional simplex.
\end{thm}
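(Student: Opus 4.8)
The plan is to obtain (\ref{eq:f=-g}) directly from the classical Rogers--Shephard inequality (\ref{RSonebody}), applied slice by slice to the super-level sets of $f$, rather than by specializing Theorem \ref{RSfunctions} to $g=\bar f$. The latter route is lossy: since $\|f*\bar f\|_\infty=(f*\bar f)(0)=\int f^2$, Theorem \ref{RSfunctions} with $g=\bar f$ only yields the bound $\binom{2n}{n}\|f\|_\infty^2\left(\int f\right)^2/\int f^2$, which is strictly larger than the right-hand side of (\ref{eq:f=-g}) because $\int f^2\le\|f\|_\infty\int f$. The gain in the symmetric case comes from the fact that both factors in $f\star\bar f$ are controlled by the \emph{same} level sets of $f$.

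First, by homogeneity I may assume $\|f\|_\infty=1$: writing $f=\|f\|_\infty\tilde f$ with $\|\tilde f\|_\infty=1$ one checks $f\star\bar f=\|f\|_\infty^2\,\tilde f\star\bar{\tilde f}$ and $\int f=\|f\|_\infty\int\tilde f$, so the general statement follows from the normalized one. Write $f=e^{-\phi}$ with $\phi:\R^n\to(-\infty,+\infty]$ convex and $\min\phi=0$, so that $\phi\ge0$. Then $f\star\bar f=e^{-\psi}$, where $\psi(x)=\inf_{z\in\R^n}\bigl(\phi(z)+\phi(z-x)\bigr)$ is a nonnegative convex function that vanishes at the origin.

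The crucial observation is an inclusion between level sets. Set $K_r=\{\phi\le r\}=\{x:f(x)\ge e^{-r}\}$, which is a convex body for a.e.\ $r>0$ since $f$ is integrable with full-dimensional support (when $K_r$ is lower-dimensional both $|K_r|$ and $|K_r-K_r|$ vanish, so such levels are negligible). If $\psi(x)\le r$ then there is $z$ with $\phi(z)+\phi(z-x)\le r$, and since $\phi\ge0$ this forces both $\phi(z)\le r$ and $\phi(z-x)\le r$; hence $z\in K_r$, $z-x\in K_r$, and $x\in K_r-K_r$. Thus $\{\psi\le r\}\subseteq K_r-K_r$ up to a boundary set of measure zero. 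Combining the layer-cake formula with this inclusion and with (\ref{RSonebody}) gives
\[
\int_{\R^n}f\star\bar f\,dx
=\int_0^\infty e^{-r}\,|\{\psi\le r\}|\,dr
\le\int_0^\infty e^{-r}\,|K_r-K_r|\,dr
\le\binom{2n}{n}\int_0^\infty e^{-r}\,|K_r|\,dr ,
\]
and since $\int_0^\infty e^{-r}|K_r|\,dr=\int_{\R^n}f$ is again the layer-cake formula for $f$, this is exactly (\ref{eq:f=-g}).

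For the equality case I would trace equality back through the two estimates used. Equality in (\ref{RSonebody}) at almost every level forces $K_r$ to be an $n$-dimensional simplex for a.e.\ $r>0$, while equality in the inclusion requires $|\{\psi\le r\}|=|K_r-K_r|$ for a.e.\ $r$. The main obstacle is to upgrade these two conditions into the rigidity statement that the family $\{K_r\}$ is constant: writing $\{\psi\le r\}=\bigcup_{0\le a\le r}(K_a-K_{r-a})$, the one-dimensional example $f=e^{-|x|}$ (where every $K_r$ is a segment, yet the inequality is strict) shows that the simplex condition alone is insufficient and that the binding requirement is that this union fill $K_r-K_r$ up to measure zero. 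I expect this last step to be the delicate part; it should force $\phi$ to be constant on its support and hence $f/\|f\|_\infty=\chi_\Delta$ for an $n$-dimensional simplex $\Delta$, and I would defer its careful treatment to the analysis of equality cases in Section \ref{EqualityCases}.
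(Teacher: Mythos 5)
Your proof of the inequality is correct and, modulo notation, it is the same argument as the paper's. Under the substitution $t=e^{-r}$ your key inclusion $\{\psi\le r\}\subseteq K_r-K_r$ is precisely the right-hand inclusion of Lemma \ref{Inclusions_onefunction}, namely $\mathcal{C}_{0,t}(f)\subseteq\mathcal{A}_{t^2}(f)(0)-\mathcal{A}_{t^2}(f)(0)$, and both rest on the same observation: a product of two factors, each bounded by $\Vert f\Vert_\infty=1$, can only reach level $t$ if each factor reaches level $t$ (in your additive language, $\phi(z)+\phi(z-x)\le r$ with $\phi\ge 0$ forces $\phi(z)\le r$ and $\phi(z-x)\le r$). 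Both proofs then apply (\ref{RSonebody}) level by level and integrate; your $\int_0^\infty e^{-r}(\cdot)\,dr$ is the paper's $\int_0^1(\cdot)\,dt$ after the change of variables. One small point: you assert the infimum defining $\psi$ is attained, which is where integrability and continuity on the support are really used (Remark \ref{maximumAsplund}); alternatively one can pass to $K_{r+\varepsilon}$ and use monotonicity in $r$. Your remark that specializing Theorem \ref{RSfunctions} to $g=\bar f$ is lossy also agrees with the paper's comment that (\ref{eq:f=-g}) improves on (\ref{eq:RSfunctions}) via Young's inequality.

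The genuine gap is the equality characterization, which is part of the statement and which you explicitly defer. Your starting conditions are the right ones (each $K_r$ a simplex from equality in (\ref{RSonebody}), and $|\{\psi\le r\}|=|K_r-K_r|$ from equality in the inclusion), and your example $f(x)=e^{-|x|}$ correctly shows the simplex condition alone is insufficient; but the rigidity you postpone is exactly the substance of the paper's argument in Section \ref{EqualityCases}, and it does not come for free. The paper closes it in two steps. First, a dedicated lemma: $f/\Vert f\Vert_\infty$ is the characteristic function of an $n$-simplex if and only if every $\mathcal{A}_t(f)(0)$ is an $n$-simplex \emph{and} $\mathcal{A}_1(f)(0)$ contains a facet of each $\mathcal{A}_t(f)(0)$; its proof uses convexity of the region below the graph of $\log f$ to force all level sets to coincide. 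Second, equality forces that facet condition: if two vertices $P,Q$ of the simplex $\mathcal{A}_{t^2}(f)(0)$ lie outside $\mathcal{A}_1(f)(0)$, then for a suitable $x=\lambda(P-Q)$ one has $x\in\mathcal{A}_{t^2}(f)(0)-\mathcal{A}_{t^2}(f)(0)$ while $f(z)f(z-x)\le(t+\varepsilon)^2<t$ wherever the two translates overlap, so $\mathcal{A}_t(f)(x)=\emptyset$ and $x\notin\mathcal{C}_{0,t}(f)$, giving a volume gap that contradicts equality. Note also that equality in the integrated inequality only yields the level-wise identities for a.e.\ $r$; the paper upgrades this to every $r$ by a continuity argument (continuity in $t$ of the three volume functions), which is needed before any pointwise geometric reasoning. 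Without these steps the ``only if'' direction of the theorem remains unproven.
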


Let us mention that the previous inequality was first obtained by Colesanti \cite[Theorem 4.3]{C} where the author proves it in the quasi-concave case without characterizing the equality case.

The fact that inequality (\ref{eq:f=-g}) is an improvement of inequality (\ref{eq:RSfunctions}) follows from Young's inequality $\Vert f*\bar f\Vert_\infty\leq\Vert f\Vert_1\Vert f\Vert_\infty$.

The notion of querma\ss integrals has also been extended from convex bodies to the setting of log-concave functions. In \cite{CF}, \cite{KM} and \cite{Ro}, the case of the perimeter and the mean width is considered while, in \cite{BCF}, a different definition is given for all the querma\ss integrals. We will work with the definition in the latter paper, where, in particular, the querma\ss integral $W_1$ (surface area) of a log-concave function is defined by
$$
W_1(f):=\int_0^\infty W_1(\{x\in\R^n\,:\, f(x)\geq t\})dt.
$$
By Crofton's formula (cf. \cite[p. 235]{S}), this equals
$$
W_1(f)=c_n\int_{\mathbb A_{n,1}}\max_{z\in E}f(z)d\mu_{n,1}(E),
$$
where $c_n=\frac{|B_2^n|}{|B_2^{n-1}|}$ is a constant depending only on $n$ and ${\mathbb A_{n,1}}$ is the set of affine 1-dimensional subspaces of $\R^n$ and $\mu_{n,1}$ is the Haar probability measure on it.

\begin{thm}\label{thm:RSsurfacefunctions}
Let $f,g:\R^n\to\R$ be two integrable log-concave functions with full-dimensional support and continuous when restricted to their supports. Then
\begin{equation}\label{eq:RSsurfacefunctions}
\int_{\R^n}f\star g(x)dx
\leq
{2n \choose n}\Vert f\Vert_\infty\Vert g\Vert_\infty
\frac
{W_1(g)\int_{\R^n}f(x)dx+W_1(f)\int_{\R^n}g(x)dx}
{2\max_{x_0\in\R^n}W_1(f(\cdot)g(x_0-\cdot))}.
\end{equation}
Furthermore, when $n\geq 3$ this inequality becomes an equality if and only if $\frac{f(x)}{\Vert f\Vert_\infty}=\frac{g(-x)}{\Vert g\Vert_\infty}$ is the characteristic function of an $n$-dimensional simplex.
\end{thm}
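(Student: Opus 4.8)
The plan is to transcribe the convex-body proof of (\ref{RSsurface}) --- which is the $k=n-1$ case of the querma\ss integral Rogers--Shephard inequality --- into the functional setting, the role of the intersection $K\cap(x-L)$ now being played by the \emph{intersection function} $h_x(z)=f(z)g(x-z)$. After normalizing so that $\Vert f\Vert_\infty=\Vert g\Vert_\infty=1$, I would set $\psi(x)=W_1(f(\cdot)g(x-\cdot))=W_1(h_x)$, so that the denominator of (\ref{eq:RSsurfacefunctions}) is $\max_{x_0}\psi(x_0)=\Vert\psi\Vert_\infty$. Granting the two identities discussed below, the inequality is equivalent to $\Vert\psi\Vert_\infty\int_{\R^n}f\star g\le\binom{2n-1}{n}\int_{\R^n}\psi$, where $\binom{2n-1}{n}=\tfrac12\binom{2n}{n}$; this is the exact analogue of the convex estimate $\max_x W_1(K\cap(x-L))\,\vert K+L\vert\le\binom{2n-1}{n}(\vert K\vert W_1(L)+\vert L\vert W_1(K))$.

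The engine is that power-form concavity, which fails for $\psi$ itself (only log-concave), is available level set by level set. For each fixed height $s$ the super-level set $M_s(x)=\{z:f(z)g(x-z)\ge s\}$ is a convex body, and from the log-concavity of $f$ and $g$ one checks $M_s(x_\lambda)\supseteq(1-\lambda)M_s(x_0)+\lambda M_s(x_1)$ for $x_\lambda=(1-\lambda)x_0+\lambda x_1$; hence, by the Brunn--Minkowski inequality for the querma\ss integral $W_1$, the map $x\mapsto W_1(M_s(x))$ is $\tfrac1{n-1}$-concave on $\{f\star g\ge s\}$. This is precisely what the $k$-th $(\theta,t)$-convolution bodies of Section \ref{ProofInequalities2} exploit for $k=n-1$: at each level $s$ they reduce to the $(n-1)$-th $\theta$-convolution bodies of the convex bodies $M_s(\cdot)$, for which the Rogers--Shephard inclusion gives a $(1-\theta^{1/(n-1)})$-homothety of $\{f\star g\ge s\}$ and thus volume at least $(1-\theta^{1/(n-1)})^n\,\vert\{f\star g\ge s\}\vert$.

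Integrating this inclusion against $d\theta$ produces the sharp constant through the Beta integral $\int_0^1(1-\theta^{1/(n-1)})^n\,d\theta=\binom{2n-1}{n}^{-1}$, while the layer-cake formula in the height $s$ reassembles $\int_{\R^n}f\star g=\int_0^\infty\vert\{f\star g\ge s\}\vert\,ds$ on the left and $\int_{\R^n}\psi=\int_0^\infty\int_{\R^n}W_1(M_s(x))\,dx\,ds$ on the right. The numerator of (\ref{eq:RSsurfacefunctions}) then comes from the Crofton identity $\int_{\R^n}W_1(f(\cdot)g(x-\cdot))\,dx=W_1(g)\int_{\R^n}f+W_1(f)\int_{\R^n}g$: plugging the representation $W_1(h)=c_n\int_{\mathbb A_{n,1}}\max_{z\in E}h(z)\,d\mu_{n,1}(E)$, exchanging the order of integration, and splitting the inner integral over $x$ into its component along each line $E$ and the transverse component gives two symmetric terms, exactly paralleling the convex computation $\int_{\R^n}W_1(K\cap(x-L))\,dx=\vert K\vert W_1(L)+\vert L\vert W_1(K)$.

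The step I expect to be the main obstacle is reconciling the global denominator $\max_{x_0}W_1(f(\cdot)g(x_0-\cdot))=\Vert\psi\Vert_\infty$ with the per-level maxima $\max_x W_1(M_s(x))$ thrown up by the level-by-level argument: for log-concave (as opposed to characteristic) $f$ and $g$ the maximizing $x$ may drift with the level $s$, so the two-parameter family of $(\theta,t)$-convolution bodies, together with a monotonicity/rearrangement argument in $s$, is needed to package the estimates into the single quotient appearing in (\ref{eq:RSsurfacefunctions}). Finally, for the equality case I would trace the two inequalities used: equality in Brunn--Minkowski for $W_1$ forces the bodies $M_s(x)$ to be homothetic, which is rigid enough to single out simplices only when $n-1\ge2$ (this is where $n\ge3$ enters; for $n=2$ the functional $W_1$ is, up to a constant, Minkowski linear and the concavity is not strict), and equality in the Rogers--Shephard inclusion forces them to be simplices; propagating these constraints over all heights $s$ and invoking the equality analysis already established for Theorem \ref{RSfunctions} yields that equality holds if and only if $\tfrac{f(x)}{\Vert f\Vert_\infty}=\tfrac{g(-x)}{\Vert g\Vert_\infty}$ is the characteristic function of an $n$-dimensional simplex.
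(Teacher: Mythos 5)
Your architecture is essentially the paper's: the $(n-1)$-th $(\theta,t)$-convolution sets used level by level, the homothety inclusion integrated against $d\theta$ with $\int_0^1(1-\theta^{1/(n-1)})^n\,d\theta=\binom{2n-1}{n}^{-1}$, the layer-cake reassembly, and the $n\ge3$ rigidity via Brunn--Minkowski for $W_1$ together with the characterization from \cite{AGJ}. The genuine gap is the step you call the ``Crofton identity.'' The equality
\[
\int_{\R^n}W_1\bigl(f(\cdot)g(x-\cdot)\bigr)\,dx
=W_1(g)\int_{\R^n}f(x)dx+W_1(f)\int_{\R^n}g(x)dx
\]
is \emph{false} for general log-concave functions; only ``$\le$'' holds. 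The convex-body counterpart is indeed an identity, but its proof uses the indicator structure: along a line, the set of positions $x$ whose fiber of $K\cap(x-L)$ is nonempty is an interval whose length is the sum of the two fiber lengths. For functions, the corresponding one-dimensional object is a sup-convolution, and one only has the containment of $\{r:\max_s\phi(s)\psi(r-s)\ge t\}$ in the Minkowski sum $\{s:\phi(s)\Vert\psi\Vert_\infty\ge t\}+\{s:\psi(s)\Vert\phi\Vert_\infty\ge t\}$, whence $\int_\R \phi\star\psi\le\Vert\psi\Vert_\infty\int_\R\phi+\Vert\phi\Vert_\infty\int_\R\psi$, which can be strict: for $\phi=\psi=e^{-|s|}$ the left side is $2$ and the right side is $4$. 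Establishing this inequality (Cauchy's projection formula, reduction to lines, the containment above, then Crofton's formula to reassemble) is precisely the bulk of the paper's proof of \eqref{eq:RSsurfacefunctions}, and it is the piece your proposal replaces by an unproved---and unprovable---identity. You are rescued by the direction of the bound: since the numerator is only needed as an upper bound for $\int_{\R^n}\psi$, the chain $\Vert\psi\Vert_\infty\int f\star g\le\binom{2n-1}{n}\int\psi\le\binom{2n-1}{n}\bigl(W_1(g)\int f+W_1(f)\int g\bigr)$ still yields the theorem; but your opening claim that the theorem is \emph{equivalent} to the middle inequality then also fails (only the implication survives).

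This error propagates into the equality analysis: equality in \eqref{eq:RSsurfacefunctions} forces equality in that extra step too, i.e.\ $\max_s f_z(s)g_w(r-s)=\max_s\min\{f_z(s)\Vert g_w\Vert_\infty,\,g_w(r-s)\Vert f_z\Vert_\infty\}$ along every line, as well as in the min-versus-product step hidden inside your core inequality; these are conditions (iii) and (iv) in the paper's proof, and it is they---not Brunn--Minkowski rigidity alone---that allow one to conclude that one of the two functions must be a characteristic function. Treating the step as an identity, your analysis can never produce these constraints, so the ``only if'' direction is incomplete as sketched. Finally, the ``main obstacle'' you single out, the drift of the maximizing $x_0$ with the level, is in fact harmless for the inequality: one bounds $M_{1,t}=\max_{x}W_1(\mathcal{A}_t(x))$ from below by $W_1(\mathcal{A}_t(x_0^*))$ for the single $x_0^*$ realizing $\Vert\psi\Vert_\infty$, and no rearrangement in $t$ is needed; the drift only matters in the equality case, where reversing that bound forces $x_0(t)$ to be constant.
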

Finally, we will prove the following extension of (\ref{RSColesanti}). Before stating it let us start with the following definition:

\begin{df}
Let $f, g:\R^n\rightarrow\R$ be two integrable log-concave functions. Define
\[
f\oplus g(z):=\sup_{2z=x+y}\sqrt{f(x)g(y)}=\sqrt{f\star g(2z)}.
\]
\end{df}
Following the proof given in \cite{C} of inequality (\ref{RSColesanti}), we can show the following result. The inequality in the result was also obtained in \cite{AEFO} in the more general case where $z=\lambda x+ (1-\lambda)y$ and not just $\lambda=\frac{1}{2}$.
%The the inequality is proved following the same lines as the proof of (\ref{RSColesanti}).
However, equality cases need a more detailed argument.

\begin{thm}\label{th_Colfg}
Let $f, g:\R^n\rightarrow\R$ be two integrable log-concave functions with full-dimensional supports and continuous when restricted to them. Then
\begin{equation}\label{eq:Colfg}
\int_{\R^n}\sqrt{f(x)\bar g(x)}dx\int_{\R^n}f\oplus g(x)dx\leq
2^n\int_{\R^n}f(x)dx\int_{\R^n}g(x)dx.
\end{equation}
Equality holds if and only if the following two conditions are satisfied:
\begin{enumerate}[(i)]
\item $\textrm{supp }f=\textrm{supp }\bar g$ is a translation of a cone $C$ with vertex at 0 with simplicial section, and
\item $f(x)=c_1 e^{-\langle a,x\rangle}$ on $\textrm{supp }f$ and $g(x)=c_2 e^{-\langle b,x\rangle}$ on $\textrm{supp }g$ for some $c_1,c_2>0$ and some $a,b\in\R^n$ such that $\langle a,x\rangle\geq0\geq\langle b,x\rangle$ for every $x\in C$.
\end{enumerate}
\end{thm}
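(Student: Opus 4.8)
The plan is to linearize the two operations $\oplus$ and $\star$ by passing to a log-concave function on $\R^{2n}$. First I would record that $\int_{\R^n}f\oplus g(z)\,dz=2^{-n}\int_{\R^n}\sqrt{f\star g(u)}\,du$ after the change $u=2z$, and then apply the substitution $x=z+w$, $y=z-w$ (volume-preserving up to the factor $2^n$) to introduce the log-concave function $\Phi(z,w)=f(z+w)g(z-w)$ on $\R^n\times\R^n$. Since $\sup_w\Phi(z,w)=f\star g(2z)$, the three quantities appearing in \eqref{eq:Colfg} become, respectively, the integral of $\sqrt{\Phi}$ over the central fibre, $\int_{\R^n}\sqrt{f(x)\bar g(x)}\,dx=\int_{\R^n}\sqrt{\Phi(0,w)}\,dw$; the integral of the square root of the sup-marginal, $\int_{\R^n}f\oplus g(z)\,dz=\int_{\R^n}\sqrt{\sup_w\Phi(z,w)}\,dz$; and the total mass, $\int f\int g=2^n\int_{\R^{2n}}\Phi$. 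Thus, for our $\Phi$, inequality \eqref{eq:Colfg} is equivalent to the clean statement
\[
\Big(\int_{\R^n}\sqrt{\Phi(0,w)}\,dw\Big)\,\Big(\int_{\R^n}\sqrt{\sup_{w}\Phi(z,w)}\,dz\Big)\ \le\ 4^n\int_{\R^{2n}}\Phi,
\]
a Rogers--Shephard inequality relating a central section, a sup-marginal (projection), and the total mass of a log-concave function.

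For the inequality itself I would follow \cite{C} and \cite{AEFO}, where it is already available. The mechanism is the layer-cake identity together with the convexity of the superlevel sets $\{f\star g\ge t\}$ and $\{f\bar g\ge t\}$ (a consequence of log-concavity of the Asplund product and of the product), combined with a Rogers--Shephard-type estimate in the spirit of \eqref{RSuniontwobodies} applied to the convex bodies $\{\Phi\ge t\}\subset\R^{2n}$ with respect to the splitting $\R^{2n}=\R^n_z\times\R^n_w$. Writing $K_s=\{f\ge s\}$ and $L_s=\{g\ge s\}$, the level sets decompose as $\{f\star g\ge t\}=\bigcup_s(K_s+L_{t/s})$ and $\{f\bar g\ge t\}=\bigcup_s\big(K_s\cap(-L_{t/s})\big)$. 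The union over the splitting level $s$ is the only delicate point, and it reflects the $\conv$-type nature of the estimate (and the resulting constant $2^n$ rather than $\binom{2n}{n}$). The remaining difficulty is to control the \emph{product} of the section integral and the projection integral against a single body mass, which I would treat through the Fubini structure along the fibres $z=\mathrm{const}$ rather than level by level.

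The real content is the equality characterization, and here the key point --- the reason a more detailed argument is needed --- is that one cannot simply invoke the equality case of \eqref{RSuniontwobodies}: that characterization is itself deduced from the present theorem, so the argument would be circular. Instead I would extract rigidity directly at the functional level by tracking equality through every step above. Equality in the layer-cake estimate forces, for almost every $t$, a single splitting $s=s(t)$ to be extremal and the section $\{\Phi\ge t\}\cap\{z=0\}$ together with its projection to $\R^n_z$ to saturate the geometric bound; intrinsically this forces the bodies $\{\Phi\ge t\}$ to be simplicial cones and, matching the two functions, $K_{s(t)}=-L_{t/s(t)}$ up to the correct homothety. Gluing these descriptions across levels, together with equality in the support-inclusion step, yields that $\mathrm{supp}\,f=\mathrm{supp}\,\bar g$ is a translate of a cone $C$ with vertex at $0$ whose cross-sections are simplices, which is condition (i).

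Finally, equality in the one-dimensional integrations over $t$ (and over the splitting $s$) forces the level sets to be homothetic with a dilation factor depending exponentially on the level, which is precisely the statement that $f$ and $g$ are exponential on their supports, $f=c_1e^{-\langle a,\cdot\rangle}$ and $g=c_2e^{-\langle b,\cdot\rangle}$; the sign conditions $\langle a,x\rangle\ge0\ge\langle b,x\rangle$ on $C$ then arise from the requirement that the supremum defining $f\star g$ be attained along the cone (equivalently, that the maximizer $w^*(z)$ be affine and remain in $\mathrm{supp}\,\Phi$), giving condition (ii). The converse is a direct check, already transparent in the one-dimensional model $f=c_1e^{-ax}\chi_{[0,\infty)}$, $g=c_2e^{-bx}\chi_{(-\infty,0]}$, for which both sides of \eqref{eq:Colfg} coincide. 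I expect the main obstacle throughout to be exactly the supremum and the union over the splitting level $s$: both proving that a single split is extremal, so that the Rogers--Shephard rigidity can be localized fibrewise, and then propagating the resulting simplicial and exponential rigidity consistently across all levels in order to assemble the global cone-plus-exponential description, rather than merely level-by-level data.
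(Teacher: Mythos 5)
Your reduction to $\Phi(z,w)=f(z+w)g(z-w)$ on $\R^{2n}$ is correct (the three quantities in \eqref{eq:Colfg} do become a central section, a sup-marginal and the total mass of $\Phi$, with constant $4^n$), and your warning that quoting the equality case of \eqref{RSuniontwobodies} would be circular is exactly right. But the proposal has a genuine gap precisely where you put the weight: the inequality itself is never proved. The layer-cake mechanism you describe is not the mechanism the paper (following \cite{C}) uses, and it faces two obstructions that you flag but do not overcome: the level sets of $f\star g$ and of $f\bar g$ are unions $\bigcup_s(K_s+L_{t/s})$, resp.\ $\bigcup_s\bigl(K_s\cap(-L_{t/s})\bigr)$, over the splitting parameter, and no single body-level Rogers--Shephard estimate controls the volume of such a union; moreover the product $\int\sqrt{\Phi(0,w)}\,dw\cdot\int\sqrt{\sup_w\Phi(z,w)}\,dz$ mixes sections and projections of level sets at \emph{different} heights, so there is no common level at which a section--projection inequality could be applied. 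The paper needs none of this machinery: for the maximizers $x_z,y_z$ with $f\oplus g(z)=\sqrt{f(x_z)g(y_z)}$ (which exist by Remark \ref{maximumAsplund}), log-concavity gives the pointwise inequality \eqref{ineqinColesanti}, $f(x)g(z-x)\geq\sqrt{f(x_z)g(y_z)}\sqrt{f(2x-x_z)\bar g(2x-x_z)}$; integrating in $x$ yields $f*g(z)\geq 2^{-n}f\oplus g(z)\int_{\R^n}\sqrt{f\bar g}$, and integrating in $z$ gives \eqref{eq:Colfg}. Everything else in the theorem hangs on this one pointwise inequality.

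The same gap propagates to the equality case, which is the real content. Tracking equality ``through every step'' of an argument that was not carried out cannot produce rigidity, and the rigidity you posit is of the wrong kind: level-by-level equality in body inequalities characterizes simplices, i.e.\ characteristic functions, whereas the extremizers of \eqref{eq:Colfg} are exponential densities on translated cones --- indeed no characteristic function of a convex body is extremal here, since condition (i) forces unbounded supports. So equality in \eqref{eq:Colfg} cannot localize to equality of a body inequality on each level set, and the ``saturation'' step you invoke would in effect require the equality case of \eqref{RSuniontwobodies}, which is exactly what is being established. The paper's rigidity analysis instead exploits equality in \eqref{ineqinColesanti} for all $x,z$: this first forces $\textrm{supp }f\cap(z-\textrm{supp }g)=\frac{x_z}{2}+\frac{1}{2}\bigl(\textrm{supp }f\cap(-\textrm{supp }g)\bigr)$ for every $z$; evaluating at $z_0=\frac{x_0+y_0}{2}$ (the maximum points of $f$ and $g$) gives the functional equations $u(x_0+x')=\frac{1}{2}u(x_0+2x')$ and its analogue for $v$, which make the supports translated cones; one then identifies the maximizers ($x_z=x_0$ for $z\in x_0+\textrm{supp }g$, using integrability to rule out rays on which $u$ would vanish), concludes $\textrm{supp }f=-\textrm{supp }g=x_0+C$, deduces that $u$ and $v$ are affine on the cone, and finally obtains the simplicial section because a section of $C$ must be homothetic to its intersection with any of its translates. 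None of these steps has a counterpart in your outline; and even the sufficiency direction is not the ``direct check'' you claim, since verifying equality under (i) and (ii) requires locating where the supremum defining $f\oplus g$ is attained on each fibre.
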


\section{$(\theta,t)$-convolution bodies of log-concave functions and Rogers-Shephard inequalities}\label{ProofInequalities}

In this section we prove the aforementioned extensions of Rogers-Shephard inequality to log-concave functions. In order to prove them we need to introduce some more notation. Given $f,g$ two integrable log-concave functions with full-dimensional support, $x\in\textrm{supp } f+\textrm{supp } g$ and $t\in(0,1]$, let us recall that we denote
$$
\mathcal{A}_{t}(x)=\mathcal{A}_{t}(f,g)(x):=\{z\in\textrm{supp } f\cap (x-\textrm{supp }g)\,:\,f(z)g(x-z)\geq t\Vert f\Vert_\infty\Vert g\Vert_\infty\}.
$$
Since $f$ and $g$ are integrable log-concave functions, $\mathcal{A}_{t}(x)$ is a bounded convex set.

\begin{df}
Let $f,g:\R^n\to\R$ be two  integrable log-concave functions with full-dimensional support, $t\in(0,1]$, $\theta\in[0,1]$. We define the $(\theta,t)$-convolution set of $f$ and $g$ as the set
$$
\mathcal{C}_{\theta, t}
=
\mathcal{C}_{\theta, t}(f,g)
:=
\left\{x\in\textrm{supp } f+\textrm{supp } g\,:\,\mathcal{A}_{t}(x)\neq\emptyset\,,\,|\mathcal{A}_{t}(x)|\geq\theta M_{t}\right\}
$$
where
$$
M_{t}
=
M_{t}(f,g)
:=
\max_{x_0\in\textrm{supp } f+\textrm{supp } g}|\mathcal{A}_{t}(x_0)|.
$$
\end{df}
\begin{rmk}
When $f(x)=\chi_K(x)$ and $g(x)=\chi_L(x)$ are the characteristic functions of two convex bodies $K$ and $L$, the sets $\mathcal{A}_{t}(x)=K\cap(x-L)$ for any $t\in(0,1]$ and we recover the definition of the $\theta$-convolution bodies $K+_\theta L$ in \cite{AJV}.
\end{rmk}

It is obvious from the definition that for any fixed $t$, the sets $\mathcal{C}_{\theta,t}$ decrease on $\theta$. The following lemma implies the convexity of these sets and gives a reverse (increasing on $\theta$) inclusion relation when normalized by the right factor, as shows Corollary \ref{inclusionrelation}.

\begin{lemma}
Let $t\in(0,1]$, $f,g:\R^n\to\R$ be two integrable log-concave functions with full-dimensional support such that $M_{t}= |\mathcal{A}_{t}(0)|$, $\theta_1,\theta_2,\lambda_1,\lambda_2\in[0,1]$ with $\lambda_1+\lambda_2\leq 1$. Then
$$
\lambda_1\mathcal{C}_{\theta_1,t}+\lambda_2\mathcal{C}_{\theta_2,t}\subseteq \mathcal{C}_{\theta,t},
$$
with $1-\theta^\frac{1}{n}=\lambda_1(1-\theta_1^\frac{1}{n})+\lambda_2(1-\theta_2^\frac{1}{n})$.
\end{lemma}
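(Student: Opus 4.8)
The plan is to reduce the set-inclusion $\lambda_1\mathcal C_{\theta_1,t}+\lambda_2\mathcal C_{\theta_2,t}\subseteq\mathcal C_{\theta,t}$ to a pointwise statement about the convex sets $\mathcal A_t(x)$, and then invoke the Brunn--Minkowski inequality. Fix $t\in(0,1]$ and suppose $x_1\in\mathcal C_{\theta_1,t}$ and $x_2\in\mathcal C_{\theta_2,t}$; I must show that $x:=\lambda_1 x_1+\lambda_2 x_2$ lies in $\mathcal C_{\theta,t}$. The first step is an inclusion between slices: I claim
$$
\lambda_1\mathcal A_t(x_1)+\lambda_2\mathcal A_t(x_2)\subseteq\mathcal A_t(x).
$$
To see this, take $z_1\in\mathcal A_t(x_1)$ and $z_2\in\mathcal A_t(x_2)$ and set $z=\lambda_1 z_1+\lambda_2 z_2$. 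Since $f$ is log-concave, $f(z)\ge f(z_1)^{\lambda_1}f(z_2)^{\lambda_2}f(0)^{1-\lambda_1-\lambda_2}$ once we note $z$ is the convex combination $\lambda_1 z_1+\lambda_2 z_2+(1-\lambda_1-\lambda_2)\cdot 0$; the analogous bound holds for $g$ evaluated at $x-z=\lambda_1(x_1-z_1)+\lambda_2(x_2-z_2)+(1-\lambda_1-\lambda_2)\cdot 0$. Here is exactly where the normalization hypothesis $M_t=|\mathcal A_t(0)|$ enters: it guarantees $0\in\mathcal A_t(0)$ in the relevant sense, so that $f(0)g(0)\ge t\Vert f\Vert_\infty\Vert g\Vert_\infty$, and the leftover weight $(1-\lambda_1-\lambda_2)$ contributes a factor bounded below by $(t\Vert f\Vert_\infty\Vert g\Vert_\infty)^{1-\lambda_1-\lambda_2}$. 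Multiplying and using $f(z_i)g(x_i-z_i)\ge t\Vert f\Vert_\infty\Vert g\Vert_\infty$ gives $f(z)g(x-z)\ge t\Vert f\Vert_\infty\Vert g\Vert_\infty$, i.e. $z\in\mathcal A_t(x)$.

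With the slice inclusion in hand, the second step is purely volumetric. Taking Lebesgue measure and applying Brunn--Minkowski to the left-hand Minkowski combination,
$$
|\mathcal A_t(x)|^{\frac1n}\ge|\lambda_1\mathcal A_t(x_1)+\lambda_2\mathcal A_t(x_2)|^{\frac1n}\ge\lambda_1|\mathcal A_t(x_1)|^{\frac1n}+\lambda_2|\mathcal A_t(x_2)|^{\frac1n}.
$$
Since $x_i\in\mathcal C_{\theta_i,t}$ means $|\mathcal A_t(x_i)|\ge\theta_i M_t$, this yields $|\mathcal A_t(x)|^{1/n}\ge(\lambda_1\theta_1^{1/n}+\lambda_2\theta_2^{1/n})M_t^{1/n}$. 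The final step is to rewrite the condition $1-\theta^{1/n}=\lambda_1(1-\theta_1^{1/n})+\lambda_2(1-\theta_2^{1/n})$ in the form $\theta^{1/n}=\lambda_1\theta_1^{1/n}+\lambda_2\theta_2^{1/n}+(1-\lambda_1-\lambda_2)$; since the slack term $(1-\lambda_1-\lambda_2)\ge 0$, we get $|\mathcal A_t(x)|^{1/n}\ge\theta^{1/n}M_t^{1/n}$, which is exactly the defining inequality for $x\in\mathcal C_{\theta,t}$ (the nonemptiness $\mathcal A_t(x)\neq\emptyset$ is immediate from the slice inclusion).

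I expect the main subtlety to lie in the first step, specifically in correctly accounting for the leftover mass $1-\lambda_1-\lambda_2$ and in verifying that the normalization $M_t=|\mathcal A_t(0)|$ really delivers the bound $f(0)g(0)\ge t\Vert f\Vert_\infty\Vert g\Vert_\infty$ rather than merely maximality of the volume; one must check that $0$ belongs to the (closure of the) maximizing slice so that the log-concavity weighting by $f(0)g(0)$ is legitimate. The rest is a clean application of Brunn--Minkowski together with algebraic bookkeeping of the exponents, and the appearance of the slack term $(1-\lambda_1-\lambda_2)$ is precisely what makes the stated relation $1-\theta^{1/n}=\lambda_1(1-\theta_1^{1/n})+\lambda_2(1-\theta_2^{1/n})$ the natural (rather than an ad hoc) normalization.
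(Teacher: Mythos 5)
Your overall strategy (a slice inclusion followed by Brunn--Minkowski) is the right one, but the proof as written has a genuine gap, and it sits exactly at the point you flagged. The hypothesis $M_t=|\mathcal{A}_t(0)|$ says only that the function $x\mapsto|\mathcal{A}_t(x)|$ attains its maximum at the \emph{argument} $x=0$; it does not say that the \emph{point} $0$ belongs to the slice $\mathcal{A}_t(0)$, so it does not deliver $f(0)g(0)\geq t\Vert f\Vert_\infty\Vert g\Vert_\infty$. A one-dimensional counterexample: $f=\chi_{[1,2]}$, $g=\chi_{[-2,-1]}$. Then $\mathcal{A}_t(x)=[1,2]\cap[x+1,x+2]$, so $|\mathcal{A}_t(0)|=1=M_t$, yet $f(0)g(0)=0$. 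Consequently your two-term slice inclusion $\lambda_1\mathcal{A}_t(x_1)+\lambda_2\mathcal{A}_t(x_2)\subseteq\mathcal{A}_t(\lambda_1x_1+\lambda_2x_2)$ is false when $\lambda_1+\lambda_2<1$: in the same example, with $x_1=x_2=0$ and $\lambda_1=\lambda_2=\frac14$, the left-hand side is $[\frac12,1]$ while the right-hand side is $[1,2]$. (When $\lambda_1+\lambda_2=1$ your argument is fine, but the case $\lambda_1+\lambda_2<1$ is the one needed for Corollary \ref{inclusionrelation}.)

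There is a second, independent defect: even if your two-term inclusion held, the last algebraic step runs backwards. Brunn--Minkowski on the two-term sum yields only $|\mathcal{A}_t(x)|^{1/n}\geq(\lambda_1\theta_1^{1/n}+\lambda_2\theta_2^{1/n})M_t^{1/n}$, and since $\theta^{1/n}=\lambda_1\theta_1^{1/n}+\lambda_2\theta_2^{1/n}+(1-\lambda_1-\lambda_2)$ is \emph{larger} than $\lambda_1\theta_1^{1/n}+\lambda_2\theta_2^{1/n}$, that lower bound does not imply $|\mathcal{A}_t(x)|\geq\theta M_t$: the slack $(1-\lambda_1-\lambda_2)$ works against you, not for you. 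Both defects are repaired simultaneously by the paper's device: carry the leftover weight $1-\lambda_1-\lambda_2$ not on the point $0$ but on an \emph{arbitrary point} $z_0$ of the set $\mathcal{A}_t(0)$. Log-concavity applied to the convex combination $(1-\lambda_1-\lambda_2)z_0+\lambda_1z_1+\lambda_2z_2$ then gives the three-term inclusion $(1-\lambda_1-\lambda_2)\mathcal{A}_t(0)+\lambda_1\mathcal{A}_t(x_1)+\lambda_2\mathcal{A}_t(x_2)\subseteq\mathcal{A}_t(\lambda_1x_1+\lambda_2x_2)$, and the three-term Brunn--Minkowski inequality produces the extra summand $(1-\lambda_1-\lambda_2)M_t^{1/n}$ --- this is where, and only where, the normalization $M_t=|\mathcal{A}_t(0)|$ is actually used --- which is precisely the missing slack: the total lower bound becomes $\bigl(1-\lambda_1(1-\theta_1^{1/n})-\lambda_2(1-\theta_2^{1/n})\bigr)M_t^{1/n}=\theta^{1/n}M_t^{1/n}$, exactly the defining inequality for membership in $\mathcal{C}_{\theta,t}$.
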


\begin{proof}
Let $x_1\in \mathcal{C}_{\theta_1,t}$, $x_2\in \mathcal{C}_{\theta_2,t}$. For any  $z_0\in \mathcal{A}_{t}(0)$, $z_1\in \mathcal{A}_{t}(x_1)$, $z_2\in \mathcal{A}_{t}(x_2)$, the log-concavity of  $f$ and $g$ implies
\begin{align*}
&f((1-\lambda_1-\lambda_2)z_0+\lambda_1z_1+\lambda_2z_2)g(\lambda_1x_1+\lambda_2x_2-(1-\lambda_1-\lambda_2)z_0-\lambda_1z_1-\lambda_2z_2)\\
&\geq(f(z_0)g(-z_0))^{1-\lambda_1-\lambda_2}(f(z_1)g(x_1-z_1))^{\lambda_1}(f(z_2)g(x_2-z_2))^{\lambda_2}\geq t\Vert f\Vert_\infty\Vert g\Vert_\infty.
\end{align*}
Thus,
$$
\mathcal{A}_{t}(\lambda_1x_1+\lambda_2x_2)\supseteq(1-\lambda_1-\lambda_2)\mathcal{A}_{t}(0)+\lambda_1\mathcal{A}_{t}(x_1)+\lambda_2\mathcal{A}_{t}(x_2)
$$
and, by Brunn-Minkowski inequality
\begin{align*}
&|\mathcal{A}_{t}(\lambda_1x_1+\lambda_2x_2)|^{\frac{1}{n}}\geq\\
&(1-\lambda_1-\lambda_2)|\mathcal{A}_{t}(0)|^\frac{1}{n}+\lambda_1|\mathcal{A}_{t}(x_1)|^\frac{1}{n}+\lambda_2|\mathcal{A}_{t}(x_2)|^\frac{1}{n}\geq\\
&(1-\lambda_1-\lambda_2)M_{t}^\frac{1}{n}+\lambda_1\theta_1^\frac{1}{n}M_{t}^\frac{1}{n}+\lambda_2\theta_2^\frac{1}{n}M_{t}^\frac{1}{n}=\\
&(1-\lambda_1(1-\theta_1^\frac{1}{n})-\lambda_2(1-\theta_2^\frac{1}{n}))M_{t}^\frac{1}{n}.
\end{align*}
Consequently, $\lambda_1x_1+\lambda_2x_2\in \mathcal{C}_{\theta,t}$.
\end{proof}
In particular, taking $\theta_1=\theta_2$ and $\lambda_1+\lambda_2=1$ we obtain that these sets $\mathcal{C}_{\theta,t}$ are convex. Besides
\begin{cor}\label{inclusionrelation}
Let $t\in(0,1]$, $f,g:\R^n\to\R$ be two integrable log-concave functions  with full-dimensional support such that $M_{t}= |\mathcal{A}_{t}(0)|$, $0\leq\theta_0\leq\theta<1$. Then
$$
\frac{\mathcal{C}_{\theta_0,t}}{1-\theta_0^\frac{1}{n}}\subseteq\frac{\mathcal{C}_{\theta,t}}{1-\theta^\frac{1}{n}}.
$$
\end{cor}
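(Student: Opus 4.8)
The plan is to deduce the corollary directly from the preceding lemma by choosing its free parameters so that the second convolution set degenerates to a single point. The first step I would take is to record the key observation that $0\in\mathcal{C}_{1,t}$. Indeed, by the standing hypothesis $M_{t}=|\mathcal{A}_{t}(0)|$, the point $x=0$ satisfies $\mathcal{A}_{t}(0)\neq\emptyset$ and $|\mathcal{A}_{t}(0)|\geq 1\cdot M_{t}$, which is exactly the membership condition defining $\mathcal{C}_{\theta,t}$ for $\theta=1$. This supplies a canonical point around which to ``contract''.

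Next, given $0\leq\theta_0\leq\theta<1$, I would set
$$
\lambda_1=\frac{1-\theta^{\frac{1}{n}}}{1-\theta_0^{\frac{1}{n}}}.
$$
Because $\theta_0\leq\theta<1$, the denominator is strictly positive and $1-\theta^{\frac{1}{n}}\leq 1-\theta_0^{\frac{1}{n}}$, so $\lambda_1\in[0,1]$. I would then apply the lemma with $\theta_1=\theta_0$, $\theta_2=1$, and $\lambda_2=1-\lambda_1$ (so that $\lambda_1+\lambda_2=1\leq 1$), taking $x_2=0\in\mathcal{C}_{1,t}$ as the point contributed by the second set. The defining relation of the lemma then reads
$$
1-\theta'^{\frac{1}{n}}=\lambda_1\bigl(1-\theta_0^{\frac{1}{n}}\bigr)+(1-\lambda_1)\bigl(1-1^{\frac{1}{n}}\bigr)=1-\theta^{\frac{1}{n}},
$$
so that the resulting level is precisely $\theta'=\theta$.

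Consequently the lemma yields $\lambda_1\mathcal{C}_{\theta_0,t}+(1-\lambda_1)\{0\}=\lambda_1\mathcal{C}_{\theta_0,t}\subseteq\mathcal{C}_{\theta,t}$, that is, $\tfrac{1-\theta^{1/n}}{1-\theta_0^{1/n}}\mathcal{C}_{\theta_0,t}\subseteq\mathcal{C}_{\theta,t}$, and dividing both sides by $1-\theta^{\frac{1}{n}}>0$ gives the asserted inclusion. I do not expect a genuine obstacle here: the argument is a one-line specialization of the lemma, and the only points needing care are verifying that $\lambda_1$ is an admissible coefficient (which is exactly where the hypotheses $\theta_0\leq\theta$ and $\theta<1$ enter) and confirming $0\in\mathcal{C}_{1,t}$, which is precisely what the normalization $M_{t}=|\mathcal{A}_{t}(0)|$ buys us. If one prefers to avoid invoking $\mathcal{C}_{1,t}$ explicitly, the identical computation goes through by taking $\lambda_2=0$ and using only the $\mathcal{A}_{t}(0)$ term with coefficient $1-\lambda_1$ that already appears inside the proof of the lemma.
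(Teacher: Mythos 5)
Your proof is correct and is essentially the paper's own argument: both deduce the corollary by specializing the preceding lemma so that the total contraction factor equals $\frac{1-\theta^{1/n}}{1-\theta_0^{1/n}}$. The only difference is cosmetic: the paper takes $\theta_1=\theta_2=\theta_0$ with $\lambda_1+\lambda_2=\frac{1-\theta^{1/n}}{1-\theta_0^{1/n}}$, whereas you take $\theta_1=\theta_0$, $\theta_2=1$ and dispose of the second summand via the observation that $0\in\mathcal{C}_{1,t}$, which the normalization $M_t=|\mathcal{A}_t(0)|$ guarantees.
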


\begin{proof}
Taking $\theta_1=\theta_2=\theta_0$ in the previous lemma, we have that for any $\lambda_1,\lambda_2\in[0,1]$ with $\lambda_1+\lambda_2\leq1$
$$
(\lambda_1+\lambda_2)\mathcal{C}_{\theta_0,t}=\lambda_1\mathcal{C}_{\theta_0,t}+\lambda_2\mathcal{C}_{\theta_0,t}\subseteq \mathcal{C}_{\theta,t},
$$
with $(\lambda_1+\lambda_2)(1-\theta_0^\frac{1}{n})=1-\theta^\frac{1}{n}$. Thus, for any $\theta_0\leq\theta\leq 1$, taking $\lambda_1+\lambda_2=\frac{1-\theta^\frac{1}{n}}{1-\theta_0^\frac{1}{n}}$ we obtain the result.
\end{proof}

In a similar way we can prove the following:
\begin{lemma}\label{ContinuityM_t}
Let $f,g:\R^n\to\R$ be two integrable log-concave functions with full-dimensional support. Then for any $t_1,t_2\in(0,1]$ and any $\lambda\in[0,1]$ we have
$$
M_{t_1^\lambda t_2^{1-\lambda}}^{\frac{1}{n}}\geq\lambda M_{t_1}^{\frac{1}{n}}+(1-\lambda)M_{t_2}^{\frac{1}{n}}.
$$
Consequently, $M_t$ is continuous on $(0,1)$.
\end{lemma}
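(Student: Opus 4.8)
The plan is to mimic the proof of the previous lemma, now tracking how the threshold level $t$ transforms when we take geometric means. Fix $t_1,t_2\in(0,1]$ and $\lambda\in[0,1]$, and write $t:=t_1^\lambda t_2^{1-\lambda}$. For a small $\varepsilon>0$, choose points $x_1,x_2\in\R^n$ that nearly realize the two maxima, i.e. $|\mathcal{A}_{t_1}(x_1)|^{1/n}\geq M_{t_1}^{1/n}-\varepsilon$ and $|\mathcal{A}_{t_2}(x_2)|^{1/n}\geq M_{t_2}^{1/n}-\varepsilon$; working with near-maximizers sidesteps having to argue that the maxima defining $M_{t_1},M_{t_2}$ are attained, though one could instead invoke attainment as in Remark \ref{maximumAsplund}. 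Set $x:=\lambda x_1+(1-\lambda)x_2$. First I would establish the inclusion
$$
\mathcal{A}_t(x)\supseteq\lambda\,\mathcal{A}_{t_1}(x_1)+(1-\lambda)\,\mathcal{A}_{t_2}(x_2).
$$
Indeed, for $z_1\in\mathcal{A}_{t_1}(x_1)$ and $z_2\in\mathcal{A}_{t_2}(x_2)$, put $z:=\lambda z_1+(1-\lambda)z_2$, so that $x-z=\lambda(x_1-z_1)+(1-\lambda)(x_2-z_2)$. The log-concavity of $f$ and of $g$ then gives
$$
f(z)g(x-z)\geq\bigl(f(z_1)g(x_1-z_1)\bigr)^\lambda\bigl(f(z_2)g(x_2-z_2)\bigr)^{1-\lambda}\geq\bigl(t_1^\lambda t_2^{1-\lambda}\bigr)\Vert f\Vert_\infty\Vert g\Vert_\infty,
$$
whence $z\in\mathcal{A}_t(x)$. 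The crucial point is that the geometric mean of the two thresholds is exactly the new threshold $t$, which is precisely what forces the factor $t_1^\lambda t_2^{1-\lambda}$ to appear on the left-hand side of the statement.

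Applying the Brunn-Minkowski inequality to the right-hand side of the inclusion, I obtain
$$
M_t^{1/n}\geq|\mathcal{A}_t(x)|^{1/n}\geq\lambda|\mathcal{A}_{t_1}(x_1)|^{1/n}+(1-\lambda)|\mathcal{A}_{t_2}(x_2)|^{1/n}\geq\lambda M_{t_1}^{1/n}+(1-\lambda)M_{t_2}^{1/n}-\varepsilon,
$$
and letting $\varepsilon\to0$ yields the asserted inequality. For the continuity statement I would reformulate this inequality as a concavity. Substituting $t_i=e^{s_i}$ and setting $\phi(s):=M_{e^s}^{1/n}$, the inequality $M_{t_1^\lambda t_2^{1-\lambda}}^{1/n}\geq\lambda M_{t_1}^{1/n}+(1-\lambda)M_{t_2}^{1/n}$ becomes exactly $\phi(\lambda s_1+(1-\lambda)s_2)\geq\lambda\phi(s_1)+(1-\lambda)\phi(s_2)$, that is, $\phi$ is concave on $(-\infty,0)$, the image of $t\in(0,1)$. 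Since a finite concave function is continuous on the interior of its domain, $\phi$, and hence $M_t=\phi(\log t)^n$, is continuous on $(0,1)$.

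The argument is essentially routine given the machinery already in place; the genuinely new ingredients are the bookkeeping that turns the product $\bigl(f(z_1)g(x_1-z_1)\bigr)^\lambda\bigl(f(z_2)g(x_2-z_2)\bigr)^{1-\lambda}$ into the threshold $t_1^\lambda t_2^{1-\lambda}$, and the observation that the stated inequality is a disguised concavity of $s\mapsto M_{e^s}^{1/n}$. The only point deserving care is that continuity is claimed merely on the open interval $(0,1)$: at the endpoint $t=1$ the set $\mathcal{A}_1(x)$ collapses to $\{z:f(z)=\Vert f\Vert_\infty,\ g(x-z)=\Vert g\Vert_\infty\}$, which may be lower-dimensional, so $M_t$ can jump down to $0$ there. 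Concavity, however, only guarantees continuity in the interior, so restricting to $(0,1)$ is exactly what the method delivers.
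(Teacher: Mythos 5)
Your proof is correct and follows essentially the same route as the paper's: the inclusion $\mathcal{A}_{t_1^\lambda t_2^{1-\lambda}}(\lambda x_1+(1-\lambda)x_2)\supseteq\lambda\mathcal{A}_{t_1}(x_1)+(1-\lambda)\mathcal{A}_{t_2}(x_2)$ from log-concavity, then Brunn--Minkowski, then continuity of $M_t$ via concavity of $s\mapsto M_{e^s}^{1/n}$. Your use of near-maximizers instead of the paper's assumption that $M_{t_i}=|\mathcal{A}_{t_i}(x_i)|$ is attained is a minor (and slightly more careful) variation, not a different method.
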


\begin{proof}
Let $x_1,x_2$ be such that $M_{t_i}=|\mathcal{A}_{t_i}(x_i)|$ for $i=1,2$. Since $f$ and $g$ are log-concave we have that for any $z_1\in\mathcal{A}_{t_1}(x_1)$, $z_2\in\mathcal{A}_{t_2}(x_2)$ and any $\lambda\in,[0,1]$
\begin{eqnarray*}
&&f(\lambda z_1+(1-\lambda)z_2)g(\lambda x_1+(1-\lambda)x_2-(\lambda z_1+(1-\lambda)z_2))\cr
&\geq&(f(z_1)g(x_1-z_1))^\lambda(f(z_2)g(x_2-z_2))^{1-\lambda}\cr
&\geq&t_1^\lambda t_2^{1-\lambda}\Vert f\Vert_\infty\Vert g\Vert_\infty.
\end{eqnarray*}
Thus,
$$
\mathcal{A}_{t_1^\lambda t_2^{1-\lambda}}(\lambda x_1+(1-\lambda)x_2)\supseteq \lambda\mathcal{A}_{t_1}(x_1)+(1-\lambda)\mathcal{A}_{t_2}(x_2).
$$
By Brunn-Minkowsi inequality
$$
M_{t_1^\lambda t_2^{1-\lambda}}^\frac{1}{n}\geq|\mathcal{A}_{t_1^\lambda t_2^{1-\lambda}}(\lambda x_1+(1-\lambda)x_2)|^\frac{1}{n}\geq  \lambda M_{t_1}^\frac{1}{n}+(1-\lambda)M_{t_2}^\frac{1}{n}.
$$
Consequently, the function $f(s)=M_{e^{s}}^\frac{1}{n}$ is concave in $(-\infty,0]$ and then it is continuous on $(-\infty,0)$. Thus, $M_t=f^n(\log t)$ is continuous on $(0,1)$.
\end{proof}
Let us now prove inequality (\ref{eq:RSfunctions}).

\begin{proof}[Proof of Theorem \ref{RSfunctions} (inequality)]

We can assume, without loss of generality, that $\Vert f\Vert_\infty=\Vert g\Vert_\infty=1$. By definition of $\mathcal{C}_{\theta,t}$, we have that for any $t\in(0,1]$
\begin{align*}
\mathcal{C}_{0,t}&=\left\{x\in\textrm{supp} f+\textrm{supp} g\,:\,\mathcal{A}_{t}(x)\neq\emptyset\right\}\\
&=\left\{x\in\textrm{supp} f+\textrm{supp} g\,:\,f\star g(x)\geq t\right\}.
\end{align*}
For any $t\in(0,1]$, let $x_0(t)\in\R^n$ be such that $M_t=|\mathcal{A}_t(x_0)|$.
By Corollary \ref{inclusionrelation} with $\theta_0=0$ and $g$ replaced by $g(\cdot+x_0)$, for any $\theta\in [0,1]$
$$
(1-\theta^\frac{1}{n})(-x_0(t)+\mathcal{C}_{0,t})\subseteq -x_0(t)+\mathcal{C}_{\theta,t}.
$$
Taking volumes and integrating in $\theta\in[0,1]$ we obtain
$$
|\mathcal{C}_{0,t}|
\leq{2n\choose n}\int_0^1|\mathcal{C}_{\theta,t}|d\theta
={2n\choose n}\int_{\R^n}\frac{|\mathcal{A}_{t}(x)|}{M_{t}}dx.
$$
Consequently
$$
M_{t}|\mathcal{C}_{0,t}|\leq{2n\choose n}\int_{\R^n}|\mathcal{A}_{t}(x)|dx
$$
and, integrating in $t\in(0,1]$
$$
\int_0^1M_{t}|\mathcal{C}_{0,t}|dt
\leq{2n\choose n}\int_0^1\int_{\R^n}|\mathcal{A}_{t}(x)|dxdt.
$$
The integral on the right-hand side is
$$
\int_0^1\int_{\R^n}|\mathcal{A}_{t}(x)|dxdt
=\int_{\R^n}\int_{\R^n}f(z)g(x-z)dzdx
=\int_{\R^n}f(x)dx\int_{\R^n}g(x)dx.
$$
On the other hand, the integral on the left-hand side is
\begin{align*}
\int_0^1M_{t}|\mathcal{C}_{0,t}|dt&=\int_{\R^n}\int_0^{f\star g(x)}\max_{x_0\in\R^n}|\mathcal{A}_{t}(x_0)|dtdx\\
&\geq\max_{x_0\in\R^n}\int_{\R^n}\int_0^{f\star g(x)}|\mathcal{A}_{t}(x_0)|dtdx\\
&=\max_{x_0\in\R^n}\int_{\R^n}\int_{\R^n}\min\left\{f\star g(x),f(z)g(x_0-z)\right\}dzdx.\\
\end{align*}
Since $\Vert f\Vert_\infty=\Vert g\Vert_\infty=1$, both quantities in the minimum are smaller than or equal to 1, the minimum is bounded from below by the product and so, this quantity is bounded from below by
$$
\max_{x_0\in\R^n}f*g(x_0)\int_{\R^n}f\star g(x)dx.
$$
Thus
$$
\Vert f*g\Vert_\infty\int_{\R^n}f\star g(x)dx\leq{2n\choose n}\Vert f\Vert_\infty\Vert g\Vert_\infty\int_{\R^n}f(x)dx\int_{\R^n}g(x)dx.
$$
\end{proof}
Let us now consider the case in which $g(x)=f(-x)$. We will denote this function $\bar f$ and $\mathcal{A}_{t}(f)(x):=\mathcal{A}_{t}(f,\bar f)(x)$.
Notice that for any $t\in(0,1)$
\begin{align*}
\mathcal{A}_{t}(f)(0)
&=\left\{z\in\R^n\,:\,f(z)^2\geq t\Vert f\Vert_\infty^2\right\}
\\
&=\left\{z\in\R^n\,:\,f(z)\geq \sqrt t\Vert f\Vert_\infty\right\}.
\end{align*}

Analogously, let us denote $M_t(f):=M_t(f,\bar f)$ and $\mathcal{C}_{\theta,t}(f):=\mathcal{C}_{\theta,t}(f,\bar f)$.

The following lemma shows that  the maximum value of $|\mathcal{A}_{t}(f)(x)|$ is attained at $x=0$.
\begin{lemma}
Let $f:\R^n\to\R$ be an integrable log-concave function with full-dimensional support, then for any $t\in(0,1)$,
$$
\mathcal{A}_{t}(f)(x)\subseteq \frac{1}{2}x+\mathcal{A}_{t}(f)(0).
$$
Consequently, $M_{t}(f)=|\mathcal{A}_{t}(f)(0)|$.
\end{lemma}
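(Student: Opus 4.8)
The plan is to prove the inclusion directly by a midpoint argument based on the log-concavity of $f$, and then deduce the statement about $M_t(f)$ simply by taking volumes.

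First I would unravel the definition in the present case $g=\bar f$. Since $\bar f(x-z)=f(z-x)$, a point $z$ belongs to $\mathcal{A}_{t}(f)(x)$ precisely when $f(z)f(z-x)\geq t\Vert f\Vert_\infty^2$, while, as already noted in the excerpt, $\mathcal{A}_{t}(f)(0)=\{z\in\R^n:f(z)\geq\sqrt{t}\,\Vert f\Vert_\infty\}$. The whole point is then to recognize that $z-\tfrac{1}{2}x$ is the midpoint of $z$ and $z-x$.

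The key step is the following direct consequence of log-concavity: if $z\in\mathcal{A}_{t}(f)(x)$, then
$$
f\Bigl(z-\tfrac{1}{2}x\Bigr)=f\Bigl(\tfrac{1}{2}z+\tfrac{1}{2}(z-x)\Bigr)\geq\sqrt{f(z)f(z-x)}\geq\sqrt{t}\,\Vert f\Vert_\infty,
$$
so that $z-\tfrac{1}{2}x\in\mathcal{A}_{t}(f)(0)$. This is exactly the asserted inclusion $\mathcal{A}_{t}(f)(x)\subseteq\tfrac{1}{2}x+\mathcal{A}_{t}(f)(0)$. For the ``consequently'' part I would take volumes: since translation is volume-preserving, the inclusion yields $|\mathcal{A}_{t}(f)(x)|\leq|\mathcal{A}_{t}(f)(0)|$ for every $x\in\R^n$, and since $x=0$ is itself admissible in the maximum defining $M_{t}(f)$, the equality $M_{t}(f)=|\mathcal{A}_{t}(f)(0)|$ follows.

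There is essentially no serious obstacle here: the argument is a one-line application of log-concavity evaluated at the correct midpoint. The only thing requiring a little care is the bookkeeping with the reflection $\bar f$, so that the geometric mean $\sqrt{f(z)f(z-x)}$ matches the level-set description of $\mathcal{A}_{t}(f)(0)$; once this is set up, both the inclusion and the identity for $M_{t}(f)$ are immediate.
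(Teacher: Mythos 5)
Your proof is correct and is essentially the paper's own argument: the same midpoint application of log-concavity, $f\bigl(z-\tfrac{1}{2}x\bigr)\geq\sqrt{f(z)f(z-x)}\geq\sqrt{t}\,\Vert f\Vert_\infty$, giving the inclusion, with the volume statement following by translation invariance. Nothing is missing.
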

\begin{proof}
Since $f$ is log-concave, for any $x\in\textrm{supp }f-\textrm{supp }f$
\begin{align*}
\mathcal{A}_{t}(f)(x)
&=\left\{z\in\R^n\,:\,\sqrt{\frac{f(z)}{\Vert f\Vert_\infty}\frac{f(z-x)}{\Vert f\Vert_\infty}}\geq\sqrt t\right\}
\\
&\subseteq\left\{z\in\R^n\,:\,\frac{f(z-\frac{1}{2}x)}{\Vert f\Vert_\infty}\geq\sqrt t\right\}\\
&=\frac{1}{2}x+\mathcal{A}_{t}(f)(0).
\end{align*}
\end{proof}
The following lemma shows a relation between the $(\theta,t)$-convolution bodies of $f$ and $\bar f$ and the $\theta$-convolution bodies of $\mathcal{A}_{t}(f)(0)$ and $-\mathcal{A}_{t}(f)(0)$.

\begin{lemma}\label{Inclusions_onefunction}
Let $f:\R^n\to\R$ be an integrable log-concave function with full-dimensional support. Then, for any $\theta\in[0,1]$
and any $t\in(0,1]$
$$
\mathcal{A}_{t}(f)(0)+_{\theta}(-\mathcal{A}_{t}(f)(0))
\subseteq
\mathcal{C}_{\theta,t}(f)
\subseteq
\mathcal{A}_{t^2}(f)(0)+_{\frac{M_{t}}{M_{t^2}}\theta}(-\mathcal{A}_{t^2}(f)(0)).
$$
\end{lemma}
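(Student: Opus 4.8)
The plan is to prove both inclusions at once by sandwiching the level set $\mathcal{A}_{t}(f)(x)$ between the two relevant self-intersections. Write $K_t:=\mathcal{A}_{t}(f)(0)=\{z\in\R^n:f(z)\geq\sqrt{t}\,\Vert f\Vert_\infty\}$ and $K_{t^2}:=\mathcal{A}_{t^2}(f)(0)=\{z\in\R^n:f(z)\geq t\,\Vert f\Vert_\infty\}$; since $\sqrt{t}\geq t$ on $(0,1]$ we have $K_t\subseteq K_{t^2}$, and by the previous lemma $M_{t}(f)=|K_t|$ and $M_{t^2}(f)=|K_{t^2}|$. Recalling that $\mathcal{A}_{t}(f)(x)=\{z:f(z)f(z-x)\geq t\Vert f\Vert_\infty^2\}$, the heart of the argument is the chain of inclusions
\[
K_t\cap(x+K_t)\ \subseteq\ \mathcal{A}_{t}(f)(x)\ \subseteq\ K_{t^2}\cap(x+K_{t^2}),
\]
valid for every $x\in\R^n$.

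First I would check these two pointwise containments. For the left one, if $z\in K_t$ and $z-x\in K_t$, then $f(z)\geq\sqrt{t}\,\Vert f\Vert_\infty$ and $f(z-x)\geq\sqrt{t}\,\Vert f\Vert_\infty$, so their product is at least $t\Vert f\Vert_\infty^2$ and $z\in\mathcal{A}_{t}(f)(x)$. For the right one, if $f(z)f(z-x)\geq t\Vert f\Vert_\infty^2$, then using $f(z-x)\leq\Vert f\Vert_\infty$ we get $f(z)\geq t\Vert f\Vert_\infty$, i.e. $z\in K_{t^2}$, and symmetrically $z-x\in K_{t^2}$; hence $z\in K_{t^2}\cap(x+K_{t^2})$.

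Next I would translate the defining conditions of the convolution bodies into this language. For any bounded convex $K$ one has $K\cap(z+K)\subseteq K$, so $z\mapsto|K\cap(z+K)|$ attains its maximum $|K|$ at $z=0$; thus $\max_{z}|K_s\cap(z+K_s)|=|K_s|=M_s(f)$ for $s\in\{t,t^2\}$, and the convolution body $K_s+_{\theta'}(-K_s)$ equals exactly $\{x\in K_s-K_s:|K_s\cap(x+K_s)|\geq\theta' M_s\}$. For the left inclusion of the lemma, take $x\in K_t+_{\theta}(-K_t)$, so $|K_t\cap(x+K_t)|\geq\theta M_t$ and $x\in K_t-K_t$; the sandwich then gives $|\mathcal{A}_{t}(f)(x)|\geq\theta M_t$ together with $\mathcal{A}_{t}(f)(x)\neq\emptyset$, hence $x\in\mathcal{C}_{\theta,t}(f)$. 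For the right inclusion, take $x\in\mathcal{C}_{\theta,t}(f)$; then $\theta M_t\leq|\mathcal{A}_{t}(f)(x)|\leq|K_{t^2}\cap(x+K_{t^2})|$, and rewriting $\theta M_t=\frac{M_t}{M_{t^2}}\theta\cdot M_{t^2}=\frac{M_t}{M_{t^2}}\theta\,\max_{z}|K_{t^2}\cap(z+K_{t^2})|$ shows precisely that $x\in K_{t^2}+_{\frac{M_t}{M_{t^2}}\theta}(-K_{t^2})$.

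I expect the obstacle to be bookkeeping rather than depth. I must verify the auxiliary membership conditions hidden in the definitions: that $\mathcal{A}_{t}(f)(x)\neq\emptyset$ (required in the definition of $\mathcal{C}_{\theta,t}$) forces $K_{t^2}\cap(x+K_{t^2})\neq\emptyset$, that is $x\in K_{t^2}-K_{t^2}$, and conversely that $x\in K_t-K_t$ yields $x\in\textrm{supp}\,f-\textrm{supp}\,f$ and nonempty $\mathcal{A}_t(f)(x)$; all of these follow from the same sandwich together with the identity $K_s\cap(x+K_s)\neq\emptyset\iff x\in K_s-K_s$. One should also confirm that the autocorrelation attains its maximum at the origin so that the normalizing constants $M_s(f)$ match, which is the one place the identity $M_s(f)=|\mathcal{A}_{s}(f)(0)|$ from the previous lemma enters, and note that the level sets $\mathcal{A}_{s}(f)(0)$ are genuine bounded full-dimensional convex bodies for $t\in(0,1)$ so that the notation $+_\theta$ applies; the degenerate endpoint $t=1$, where $\mathcal{A}_{1}(f)(0)$ may drop in dimension, should be handled separately or dismissed by a limiting remark.
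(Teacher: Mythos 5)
Your proof is correct and takes essentially the same route as the paper: the paper's own proof consists precisely of the two pointwise containments $\mathcal{A}_{t}(f)(0)\cap(x+\mathcal{A}_{t}(f)(0))\subseteq\mathcal{A}_{t}(f)(x)\subseteq\mathcal{A}_{t^2}(f)(0)\cap(x+\mathcal{A}_{t^2}(f)(0))$, established by the same product-versus-minimum comparison, leaving implicit the translation into the $+_\theta$ language that you spell out via $M_s(f)=|\mathcal{A}_s(f)(0)|$ and $\max_{z}|K\cap(z+K)|=|K|$. Your additional bookkeeping, including the caveat about the degenerate endpoint $t=1$, is sound and if anything slightly more careful than the paper.
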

\begin{proof}
Notice that
\begin{align*}
\mathcal{A}_{t}(f)(x)
&=
\left\{z\in\R^n\,:\,\frac{f(z)}{\Vert f\Vert_\infty}\frac{f(z-x)}{\Vert f\Vert_\infty}\geq t\right\}
\\
&\subseteq\left\{z\in\R^n\,:\,\min\left\{\frac{f(z)}{\Vert f\Vert_\infty},\frac{f(z-x)}{\Vert f\Vert_\infty}\right\}\geq t\right\}
\\
&=\mathcal{A}_{t^2}(f)(0)\cap(x+\mathcal{A}_{t^2}(f)(0)),\\
\end{align*}
which proves the right-hand side inequality. On the other hand, we trivially have
\begin{align*}
\mathcal{A}_{t}(f)(x)
&=\left\{z\in\R^n\,:\,\frac{f(z)}{\Vert f\Vert_\infty}\frac{f(z-x)}{\Vert f\Vert_\infty}\geq t\right\}
\\
&\supseteq\left\{z\in\R^n\,:\,\min\left\{\frac{f(z)}{\Vert f\Vert_\infty},\frac{f(z-x)}{\Vert f\Vert_\infty}\right\}\geq \sqrt t\right\}
\\
&=\mathcal{A}_{t}(f)(0)\cap(x+\mathcal{A}_{t}(f)(0)),
\end{align*}
which proves the left-hand side inequality.
\end{proof}

Using these relations we are now able to prove inequality (\ref{eq:f=-g}).

\begin{proof}[Proof of Theorem \ref{thm:g=f-} (inequality)]
From the right-hand side inequality in Lemma \ref{Inclusions_onefunction} we have that for any $t\in(0,1]$
$$\mathcal{C}_{0,t}(f)\subseteq \mathcal{A}_{t^2}(f)(0)-\mathcal{A}_{t^2}(f)(0).$$
Thus, taking volumes and using Rogers-Shephard inequality (\ref{RSonebody}) we have
$$
|\mathcal{C}_{0,t}(f)|\leq {2n \choose n}|\mathcal{A}_{t^2}(f)(0)|.
$$
Integrating $t\in (0,1]$ we obtain the result.
\end{proof}

\section{$k$-th $(\theta,t)$-convolution bodies of log-concave functions and Rogers-Shephard inequality}\label{ProofInequalities2}

In this section we will prove inequality (\ref{eq:RSsurfacefunctions}). To this end, we consider the following sets.
\begin{df}
Let $f,g:\R^n\to\R$ be two  integrable log-concave functions with full-dimensional support, $t\in(0,1]$, $\theta\in[0,1]$. We define the $k$-th $(\theta,t)$-convolution set of $f$ and $g$
$$
\mathcal{C}^k_{\theta, t}=
\mathcal{C}^k_{\theta, t}(f,g):=
\left\{x\in\textrm{supp } f+\textrm{supp } g\,:\,\mathcal{A}_{t}(x)\neq\emptyset\,,\,W_{n-k}(\mathcal{A}_{t}(x))\geq\theta M_{n-k,t}\right\}$$
where
$$
M_{n-k,t}=M_{n-k,t}(f,g):=\max_{x_0\in\textrm{supp } f+\textrm{supp } g}W_{n-k}(\mathcal{A}_{t}(x_0)).
$$
\end{df}
\begin{rmk}
When $f(x)=\chi_K(x)$ and $g(x)=\chi_L(x)$ are the characteristic functions of two convex bodies, the sets $\mathcal{A}_{t}(x)=K\cap(x-L)$ for any $t\in(0,1]$ and we recover the definition of the $k$-th $\theta$-convolution bodies $K+_{k,\theta} L$ in \cite{AGJ}.
\end{rmk}
When $k=n$ these are the convex bodies $\mathcal{C}_{\theta,t}$ introduced in the previous section and, with identical proofs, using the Brunn-Minkowski inequality for querma\ss integrals (see \cite{S}, Theorem 6.4.3) we have

\begin{lemma}
Let $t\in(0,1]$, $f,g:\R^n\to\R$ be two integrable log-concave integrable functions with full-dimensional support such that $M_{n-k,t}= W_{n-k}(\mathcal{A}_{t}(0))$, $\theta_1,\theta_2,\lambda_1,\lambda_2\in[0,1]$ with $\lambda_1+\lambda_2\leq 1$. Then
$$
\lambda_1\mathcal{C}^k_{\theta_1,t}+\lambda_2\mathcal{C}^k_{\theta_2,t}\subseteq \mathcal{C}^k_{\theta,t},
$$
with $1-\theta^\frac{1}{k}=\lambda_1(1-\theta_1^\frac{1}{k})+\lambda_2(1-\theta_2^\frac{1}{k})$.
\end{lemma}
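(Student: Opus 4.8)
The plan is to follow verbatim the argument given for the $k=n$ case (the first lemma of the previous section), replacing the volume $|\cdot|$ by the quermassintegral $W_{n-k}$ and the classical Brunn-Minkowski inequality by its counterpart for quermassintegrals. The only structural change is that the exponent $\frac{1}{n}$ is replaced everywhere by $\frac{1}{k}$, which is forced by the fact that $W_{n-k}$ is homogeneous of degree $k$.

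First I would fix $x_1\in\mathcal{C}^k_{\theta_1,t}$ and $x_2\in\mathcal{C}^k_{\theta_2,t}$ and reproduce, unchanged, the pointwise log-concavity estimate: for any $z_0\in\mathcal{A}_{t}(0)$, $z_1\in\mathcal{A}_{t}(x_1)$, $z_2\in\mathcal{A}_{t}(x_2)$, the log-concavity of $f$ and $g$ gives
$$
f((1-\lambda_1-\lambda_2)z_0+\lambda_1z_1+\lambda_2z_2)g(\lambda_1x_1+\lambda_2x_2-(1-\lambda_1-\lambda_2)z_0-\lambda_1z_1-\lambda_2z_2)\geq t\Vert f\Vert_\infty\Vert g\Vert_\infty,
$$
whence the Minkowski inclusion
$$
\mathcal{A}_{t}(\lambda_1x_1+\lambda_2x_2)\supseteq(1-\lambda_1-\lambda_2)\mathcal{A}_{t}(0)+\lambda_1\mathcal{A}_{t}(x_1)+\lambda_2\mathcal{A}_{t}(x_2).
$$
Next I would invoke the quermassintegral machinery. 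Since $W_{n-k}$ is monotone with respect to set inclusion, the inclusion above passes to an inequality between quermassintegrals; then, combining the homogeneity of degree $k$ of $W_{n-k}$ with the Brunn-Minkowski inequality for quermassintegrals (\cite{S}, Theorem 6.4.3), which asserts the concavity of $W_{n-k}^{1/k}$ under Minkowski addition, I obtain
$$
W_{n-k}(\mathcal{A}_{t}(\lambda_1x_1+\lambda_2x_2))^{\frac{1}{k}}\geq(1-\lambda_1-\lambda_2)W_{n-k}(\mathcal{A}_{t}(0))^{\frac{1}{k}}+\lambda_1W_{n-k}(\mathcal{A}_{t}(x_1))^{\frac{1}{k}}+\lambda_2W_{n-k}(\mathcal{A}_{t}(x_2))^{\frac{1}{k}}.
$$
Using $W_{n-k}(\mathcal{A}_{t}(0))=M_{n-k,t}$ together with the lower bounds $W_{n-k}(\mathcal{A}_{t}(x_i))\geq\theta_i M_{n-k,t}$ coming from $x_i\in\mathcal{C}^k_{\theta_i,t}$, the right-hand side is at least $(1-\lambda_1(1-\theta_1^{\frac{1}{k}})-\lambda_2(1-\theta_2^{\frac{1}{k}}))M_{n-k,t}^{\frac{1}{k}}=\theta^{\frac{1}{k}}M_{n-k,t}^{\frac{1}{k}}$, so that $\lambda_1x_1+\lambda_2x_2\in\mathcal{C}^k_{\theta,t}$.

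I expect the argument to be essentially routine, precisely because the combinatorial and log-concavity part is literally identical to the $k=n$ case. The only point genuinely requiring care is the application of the Brunn-Minkowski inequality for quermassintegrals: one must check that Theorem 6.4.3 of \cite{S} applies here, which means working with the closures of the bounded convex sets $\mathcal{A}_{t}(\cdot)$ so as to have genuine convex bodies, and one must track the correct homogeneity exponent $\frac{1}{k}$ throughout. Once $\frac{1}{k}$ consistently replaces $\frac{1}{n}$, the elementary algebra yielding the relation $1-\theta^{\frac{1}{k}}=\lambda_1(1-\theta_1^{\frac{1}{k}})+\lambda_2(1-\theta_2^{\frac{1}{k}})$ is unchanged.
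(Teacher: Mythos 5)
Your proposal is correct and is precisely the argument the paper intends: the paper states this lemma without writing the proof, remarking only that it follows "with identical proofs" from the $k=n$ case using the Brunn-Minkowski inequality for quermassintegrals (Theorem 6.4.3 in \cite{S}), which is exactly the substitution you carry out, including the correct exponent $\frac{1}{k}$ coming from the degree-$k$ homogeneity of $W_{n-k}$. Your added remark about passing to closures of the bounded convex sets $\mathcal{A}_t(\cdot)$ before applying the quermassintegral inequality is a sensible technical precaution that the paper itself glosses over.
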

Consequently,

\begin{cor}\label{inclusionrelationk}
Let $t\in(0,1]$, $f,g:\R^n\to\R$ be two integrable log-concave functions  with full-dimensional support such that $M_{n-k,t}= W_{n-k}(\mathcal{A}_{t}(0))$, $0\leq\theta_0\leq\theta<1$. Then
$$
\frac{\mathcal{C}^k_{\theta_0,t}}{1-\theta_0^\frac{1}{k}}\subseteq\frac{\mathcal{C}^k_{\theta,t}}{1-\theta^\frac{1}{k}}.
$$
\end{cor}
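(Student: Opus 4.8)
The plan is to imitate the proof of Corollary \ref{inclusionrelation} essentially verbatim, replacing the exponent $1/n$ by $1/k$ throughout, since the preceding ($k$-th) lemma has exactly the same shape as the lemma underlying Corollary \ref{inclusionrelation}, only with querma\ss integrals and the exponent $1/k$ in place of volumes and $1/n$. First I would apply that lemma with $\theta_1=\theta_2=\theta_0$: for any $\lambda_1,\lambda_2\in[0,1]$ with $\lambda_1+\lambda_2\leq 1$ it gives
$$
\lambda_1\mathcal{C}^k_{\theta_0,t}+\lambda_2\mathcal{C}^k_{\theta_0,t}\subseteq \mathcal{C}^k_{\theta,t},\qquad 1-\theta^{\frac{1}{k}}=(\lambda_1+\lambda_2)\bigl(1-\theta_0^{\frac{1}{k}}\bigr).
$$

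Next I would invoke the convexity of $\mathcal{C}^k_{\theta_0,t}$, which is the case $\lambda_1+\lambda_2=1$ of the same lemma (with $\theta=\theta_0$). Convexity lets me collapse the Minkowski sum of two dilates of one convex set into a single dilate, $\lambda_1\mathcal{C}^k_{\theta_0,t}+\lambda_2\mathcal{C}^k_{\theta_0,t}=(\lambda_1+\lambda_2)\mathcal{C}^k_{\theta_0,t}$, so the inclusion above reads $(\lambda_1+\lambda_2)\mathcal{C}^k_{\theta_0,t}\subseteq\mathcal{C}^k_{\theta,t}$.

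Then, given $0\leq\theta_0\leq\theta<1$, I would choose $\lambda_1+\lambda_2=\frac{1-\theta^{1/k}}{1-\theta_0^{1/k}}$, so that the relation $1-\theta^{1/k}=(\lambda_1+\lambda_2)(1-\theta_0^{1/k})$ is satisfied. Substituting yields
$$
\frac{1-\theta^{\frac{1}{k}}}{1-\theta_0^{\frac{1}{k}}}\,\mathcal{C}^k_{\theta_0,t}\subseteq\mathcal{C}^k_{\theta,t},
$$
which is precisely the asserted inclusion after dividing both sides by $1-\theta^{1/k}$.

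The only point that needs checking — the closest thing to an obstacle, though a mild one — is that this choice of $\lambda_1+\lambda_2$ is admissible, i.e. lies in $[0,1]$ so that the lemma indeed applies. This holds exactly because $\theta_0\leq\theta$: since $x\mapsto x^{1/k}$ is increasing on $[0,1]$, we have $\theta_0^{1/k}\leq\theta^{1/k}$, hence $1-\theta^{1/k}\leq 1-\theta_0^{1/k}$, making the ratio at most $1$ (and clearly nonnegative, using $\theta<1$ so the denominator is positive). I would also record that the hypothesis $M_{n-k,t}=W_{n-k}(\mathcal{A}_t(0))$ matches the assumption of the lemma; if the maximizer of $W_{n-k}(\mathcal{A}_t(\cdot))$ sits elsewhere one first translates $g$, exactly as in the case $k=n$. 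No analytic input beyond the $k$-th lemma is required.
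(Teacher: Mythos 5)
Your proposal is correct and coincides with the paper's own argument: the paper proves Corollary \ref{inclusionrelationk} exactly as Corollary \ref{inclusionrelation}, taking $\theta_1=\theta_2=\theta_0$ in the $k$-th lemma, using convexity to write $\lambda_1\mathcal{C}^k_{\theta_0,t}+\lambda_2\mathcal{C}^k_{\theta_0,t}=(\lambda_1+\lambda_2)\mathcal{C}^k_{\theta_0,t}$, and choosing $\lambda_1+\lambda_2=\frac{1-\theta^{1/k}}{1-\theta_0^{1/k}}$. Your check that this ratio lies in $[0,1]$ is the same (implicit) admissibility observation, so nothing is missing.
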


\begin{lemma}\label{ContinuityM_n-k,t}
Let $f,g:\R^n\to\R$ be two integrable log-concave functions with full-dimensional support. Then for any $t_1,t_2\in(0,1]$ and any $\lambda\in[0,1]$ we have
$$
M_{n-k,t_1^\lambda t_2^{1-\lambda}}^{\frac{1}{k}}\geq\lambda M_{n-k,t_1}^{\frac{1}{k}}+(1-\lambda)M_{n-k,t_2}^{\frac{1}{k}}.
$$
Consequently, $M_{n-k,t}$ is continuous on $(0,1)$.
\end{lemma}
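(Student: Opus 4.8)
The plan is to mirror the proof of Lemma \ref{ContinuityM_t} almost verbatim, replacing the Lebesgue measure by the quermaßintegral $W_{n-k}$ and the classical Brunn-Minkowski inequality by its quermaßintegral analogue. The whole point is that the geometric content — a set inclusion coming from log-concavity — is insensitive to which functional is used to measure the slices $\mathcal{A}_t$, so only the final concavity step needs to be upgraded.

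First I would fix $t_1,t_2\in(0,1]$ and $\lambda\in[0,1]$ and choose points $x_1,x_2$ attaining the relevant maxima, i.e. $W_{n-k}(\mathcal{A}_{t_i}(x_i))=M_{n-k,t_i}$ for $i=1,2$. The key inclusion
$$
\mathcal{A}_{t_1^\lambda t_2^{1-\lambda}}(\lambda x_1+(1-\lambda)x_2)\supseteq \lambda\mathcal{A}_{t_1}(x_1)+(1-\lambda)\mathcal{A}_{t_2}(x_2)
$$
is exactly the one established in the proof of Lemma \ref{ContinuityM_t}: for $z_i\in\mathcal{A}_{t_i}(x_i)$ the log-concavity of $f$ and $g$ gives $f(\lambda z_1+(1-\lambda)z_2)g(\lambda x_1+(1-\lambda)x_2-(\lambda z_1+(1-\lambda)z_2))\geq t_1^\lambda t_2^{1-\lambda}\Vert f\Vert_\infty\Vert g\Vert_\infty$, so the convex combination lies in the left-hand set. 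This step is purely pointwise and does not reference any volume, hence carries over without change.

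Next I would apply the monotonicity of $W_{n-k}$ together with the Brunn-Minkowski inequality for quermaßintegrals (\cite{S}, Theorem 6.4.3), which asserts that $A\mapsto W_{n-k}(A)^{1/k}$ is concave with respect to Minkowski addition. Applying it to the inclusion above yields
$$
M_{n-k,t_1^\lambda t_2^{1-\lambda}}^{1/k}\geq W_{n-k}\bigl(\lambda\mathcal{A}_{t_1}(x_1)+(1-\lambda)\mathcal{A}_{t_2}(x_2)\bigr)^{1/k}\geq \lambda M_{n-k,t_1}^{1/k}+(1-\lambda)M_{n-k,t_2}^{1/k}.
$$
This is the only place where care is genuinely needed: one must invoke the correct normalization so that the concave functional is $W_{n-k}^{1/k}$ (here $n-(n-k)=k$), and not some other power; this is precisely what produces the exponent $1/k$ in the statement rather than $1/n$. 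I do not anticipate any deeper obstacle, since the rest is formal.

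Finally, the continuity claim follows exactly as before. The inequality just proved says that the function $\varphi(s):=M_{n-k,e^s}^{1/k}$ is concave on $(-\infty,0]$, hence continuous on the open interval $(-\infty,0)$, and therefore $M_{n-k,t}=\varphi(\log t)^k$ is continuous on $(0,1)$.
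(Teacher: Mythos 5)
Your proof is correct and follows exactly the route the paper intends: the paper states that this lemma holds ``with identical proofs'' to Lemma \ref{ContinuityM_t}, replacing the classical Brunn--Minkowski inequality by its querma\ss integral version (\cite{S}, Theorem 6.4.3), which is precisely what you do, including the correct exponent $1/k$ from the homogeneity of $W_{n-k}$ and the concavity argument for continuity.
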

Using the $(n-1)$-th $(\theta,t)$-convolution bodies of $f$ and $g$, we are now able to prove inequality (\ref{eq:RSsurfacefunctions}).

\begin{proof}[Proof of Theorem \ref{thm:RSsurfacefunctions} (inequality)]
We can assume, without loss of generality that $\Vert f\Vert_\infty=\Vert g\Vert_\infty=1$. Proceeding as in the proof of Theorem \ref{RSfunctions} we have that
\begin{align*}
\int_0^1M_{n-k,t}|\mathcal{C}^k_{0,t}|dt&\leq{n+k\choose n}\int_0^1\int_{\R^n}W_{n-k}(\mathcal{A}_{t}(x))dxdt.
\end{align*}
If $k={n-1}$
$$
\int_0^1M_{1,t}|\mathcal{C}^{n-1}_{0,t}|dt
=
\frac{1}{2}{2n\choose n}\int_0^1\int_{\R^n}W_{1}(\mathcal{A}_{t}(x))dxdt.
$$
Since $W_1(A)=c_n\int_{S^{n-1}}|P_{\theta^\perp}A|d\sigma(\theta)$, where $c_n$ is a constant depending only on $n$, the integral on the right hand side equals
\begin{align*}
&c_n\int_0^1\int_{\R^n}\int_{S^{n-1}}\int_{\theta^\perp}\chi_{\{\max_{s\in\R}f(z+s\theta)g(x-z-s\theta)\geq t\}}(z)dzd\sigma(\theta)dxdt
\\
=&c_n\int_{S^{n-1}}\int_{\theta^\perp}\int_{\R^n}\max_{s\in\R}f(z+s\theta)g(x-z-s\theta)dxdzd\sigma(\theta)
\\
=&c_n\int_{S^{n-1}}\int_{\theta^\perp}\int_{\theta^\perp}\int_\R\max_{s\in\R}f(z+s\theta)g(w-z+(r-s)\theta)drdwdzd\sigma(\theta)
\\
=&c_n\int_{S^{n-1}}\int_{\theta^\perp}\int_{\theta^\perp}\int_\R\max_{s\in\R}f(z+s\theta)g(w+(r-s)\theta)drdwdzd\sigma(\theta).
\end{align*}
Let $f_z(s)=f(z+s\theta)$ and $g_w(s)=g(w+s\theta)$ for fixed $\theta\in S^{n-1}$, $z,w\in\theta^\perp$. This quantity equals
$$
c_n\int_{S^{n-1}}\int_{\theta^\perp}\int_{\theta^\perp}\int_0^1\left|\left\{r\in\R\,:\,\max_{s\in\R}f_z(s)g_w(r-s)\geq t\right\}\right|dtdwdzd\sigma(\theta).
$$
Since the set in the integrand is contained in the set
\begin{align*}
&\left\{r\in\R\,:\,\max_{s\in\R}\min\left\{f_z(s)\Vert g_w\Vert_\infty,g_w(r-s)\Vert f_z\Vert_\infty\right\}\geq t\right\}
\\
=&\left\{r\in\R\,:\,\left\{s\,:f_z(s)\Vert g_w\Vert_\infty\geq t\right\}\cap\left(r-\left\{s\,:\,g_w(s)\Vert f_z\Vert_\infty\geq t\right\}\right)\neq\emptyset\right\}
\\
=&\left\{s\,:f_z(s)\Vert g_w\Vert_\infty\geq t\right\}+\left\{s\,:\,g_w(s)\Vert f_z\Vert_\infty\geq t\right\},
\end{align*}
and the 1-dimensional volume of the sum of segments is the sum of the volumes of the segments,
the previous integral is bounded from above by
\begin{align*}
&\le
c_n\int_{S^{n-1}}\int_{\theta^\perp}\int_{\theta^\perp}\int_0^{\Vert f_z\Vert_\infty\Vert g_w\Vert_\infty}\left|\left\{s\,:f_z(s)\Vert g_w\Vert_\infty\geq t\right\}\right|dtdwdzd\sigma(\theta)
\\
&+
c_n\int_{S^{n-1}}\int_{\theta^\perp}\int_{\theta^\perp}\int_0^{\Vert f_z\Vert_\infty\Vert g_w\Vert_\infty}\left|\left\{s\,:g_w(s)\Vert f_z\Vert_\infty\geq t\right\}\right|dtdwdzd\sigma(\theta)
\\
&=
c_n\int_{S^{n-1}}\int_{\theta^\perp}\int_{\theta^\perp}\Vert g_w\Vert_\infty\int_0^{\Vert f_z\Vert_\infty}\left|\left\{s\,:f_z(s)\geq t\right\}\right|dtdwdzd\sigma(\theta)
\\
&+
c_n\int_{S^{n-1}}\int_{\theta^\perp}\int_{\theta^\perp}\Vert f_z\Vert_\infty\int_0^{\Vert g_w\Vert_\infty}\left|\left\{s\,:g_w(s)\geq t\right\}\right|dtdwdzd\sigma(\theta)
\\
&=
c_n\int_{S^{n-1}}\int_{\theta^\perp}\int_{\theta^\perp}\Vert g_w\Vert_\infty\int_\R f_z(s) dsdwdzd\sigma(\theta)
\\
&+
c_n\int_{S^{n-1}}\int_{\theta^\perp}\int_{\theta^\perp}\Vert f_z\Vert_\infty\int_\R g_w(s)dtdwdzd\sigma(\theta)
\\
&=
c_n\int_{S^{n-1}}\int_{\theta^\perp}\Vert g_w\Vert_\infty\int_{\R^n}f(x) dxdwd\sigma(\theta)
\\
&+
c_n\int_{S^{n-1}}\int_{\theta^\perp}\Vert f_z\Vert_\infty\int_{\R^n}g(x)dxdzd\sigma(\theta)
\\
&=
c_n\int_{\mathbb A_{n,1}}\max_{z\in E}g(z)d\mu_{n,1}(E)\int_{\R^n}f(x) dx
\\
&+
c_n\int_{\mathbb A_{n,1}}\max_{z\in E}f(z)d\mu_{n,1}(E)\int_{\R^n}g(x)dx
\end{align*}
by Crofton's formula (cf. \cite[p. 235]{S}).

On the other hand, the integral in the left hand side is
\begin{align*}
\int_0^1M_{1,t}| & \mathcal{C}^{n-1}_{0,t}|dt=\int_{\R^n}\int_0^{f\star g(x)}\max_{x_0\in\R^n}W_1(\mathcal{A}_t(x_0))dtdx
\\
&\geq\max_{x_0\in\R^n}\int_{\R^n}\int_0^{f\star g(x)}W_1(\mathcal{A}_t(x_0))dtdx
\\
&=c_n\max_{x_0\in\R^n}\int_{\R^n}\int_0^{f\star g(x)}\int_{\mathbb A_{n,1}}\chi_{\{\max_{z\in E}f(z)g(x_0-z)\geq t\}}(E)d\mu_{n,1}(E)dtdx
\\
&=c_n\max_{x_0\in\R^n}\int_{\R^n}\int_{\mathbb A_{n,1}}\min\left\{f\star g(x),\max_{z\in E}f(z)g(x_0-z)\right\}d\mu_{n,1}(E)dx
\\
&\geq c_n \left(\int_{\R^n}f\star g(x)dx\right)\,\max_{x_0\in\R^n}\int_{\mathbb A_{n,1}}\max_{z\in E}f(z)g(x_0-z)d\mu_{n,1}(E).
\end{align*}
using Crofton's formula.
\end{proof}

\section{Equality cases in functional Rogers-Shephard inequalities}\label{EqualityCases}

In this section we will characterize the equality cases in Theorems \ref{RSfunctions}, \ref{thm:g=f-} and \ref{thm:RSsurfacefunctions}. Since the characterization of the equality cases in  Theorems \ref{RSfunctions} and \ref{thm:RSsurfacefunctions} follow the same lines we will write them together later and we will start with Theorem \ref{thm:g=f-}.

To characterize the equality in Theorem \ref{thm:g=f-} we need the following characterization of the characteristic function of a simplex:

\begin{lemma}
Let $f:\R^n\to\R$ be an integrable log-concave function with full-dimensional support such that $f$ is continuous when restricted to it and verifies $\Vert f\Vert_\infty=1$. Then $f(x)=\chi_{K}(x)$ with $K$ an $n$-dimensional simplex if and only if for every $t\in(0,1)$
\begin{enumerate}[(i)]
\item  $\mathcal{A}_{t}(f)(0)$ is an $n$-dimensional simplex, and
\item $\mathcal{A}_{1}(f)(0)$ contains a facet of $\mathcal{A}_{t}(f)(0)$.
\end{enumerate}
\end{lemma}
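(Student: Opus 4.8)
The plan is to phrase everything through superlevel sets. Normalizing $\Vert f\Vert_\infty=1$ and writing $u=-\log f$ (a convex function), one has $\mathcal{A}_t(f)(0)=\{z:f(z)\ge\sqrt t\}$, so with $s=\sqrt t$ the two conditions read: for every $s\in(0,1)$ the superlevel set $L_s:=\{f\ge s\}$ is an $n$-simplex, and $M:=\mathcal{A}_1(f)(0)=\{f=1\}$ contains a facet $F_s$ of $L_s$. The $L_s$ are nested, decreasing to $M$ as $s\to1^-$ and increasing to $\mathrm{supp}\,f$ as $s\to0^+$, and $f=\chi_K$ with $K$ an $n$-simplex is equivalent to the family $\{L_s\}_{s\in(0,1)}$ being constant. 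The forward implication is then immediate: if $f=\chi_K$ then $f$ takes only the values $0$ and $1$, so $L_s=K$ for all $s$ and $M=K$, which is a simplex containing every facet of $L_s=K$. Thus the real content is the converse: under (i) and (ii), all the $L_s$ coincide.

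For the converse I would argue by contradiction, supposing $M\subsetneq\mathrm{supp}\,f$. Continuity of $f$ on its (connected) support, together with $f\equiv1$ on $M$ and the intermediate value theorem, shows $L_s\supsetneq M$ for every $s\in(0,1)$. Fix such an $s$, put $K=L_s$ with facet $F=F_s\subseteq M$ and opposite vertex $p$; since $M$ is convex, $f(p)=1$ would give $M\supseteq\conv(F\cup\{p\})=K\supsetneq M$, so in fact $f(p)<1$. The key geometric observation is that $F\subseteq\partial K$ carries the \emph{maximal} value $f\equiv1$: if $f$ were continuous in a full neighbourhood of a point $x\in\mathrm{relint}\,F$, then $f>s$ near $x$ and hence $x\in\mathrm{int}\,L_s$, contradicting $x\in\partial L_s$; therefore $\mathrm{supp}\,f$ lies on one side of $\mathrm{aff}(F)$ and $F\subseteq\partial(\mathrm{supp}\,f)$. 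I would then prove a rigidity statement: on any facet of $K$ whose relative interior meets $\mathrm{int}(\mathrm{supp}\,f)$ one has $f\equiv s$, i.e. $u\equiv-\log s>0$, because such points lie on $\partial L_s$ where $f$ is continuous; but for $n\ge2$ every facet of $K$ other than $F$ contains a vertex of $F$, at which $u=0$, contradicting the continuity of $u$ on the support. This forces every facet of $K$ onto $\partial(\mathrm{supp}\,f)$, and convexity of the support then yields $\mathrm{supp}\,f=K$.

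With $\mathrm{supp}\,f=K$ established, $\mathrm{int}(\mathrm{supp}\,f)=\mathrm{int}\,K$ is genuinely full-dimensional, and I would close the argument at a higher level. Choosing $s'\in(\max\{s,f(p)\},1)$, condition (i)–(ii) gives that $L_{s'}$ is a simplex strictly contained in $K$ with a facet $F'\subseteq M$, so $f\equiv1$ on $F'$ while $f$ takes the value $s'<1$ on the remaining boundary of $L_{s'}$. Now the previous rigidity argument can be rerun at level $s'$: a facet of $L_{s'}$ distinct from $F'$ has its relative interior inside $\mathrm{int}\,K=\mathrm{int}(\mathrm{supp}\,f)$, where $f$ is pinned to the boundary value $s'$, yet it contains a vertex $w$ of $F'$ with $f(w)=1$; letting points tend to $w$ and using continuity of $f$ on the support gives $s'=1$, a contradiction. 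Hence no level can satisfy $L_s\supsetneq M$, all the $L_s$ coincide, and $f=\chi_K$ with $K=L_s$ a simplex by (i).

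I expect the \textbf{main obstacle} to be precisely the support-boundary bookkeeping underlying the middle step. A priori the apex or the other vertices of a level simplex may lie on $\partial(\mathrm{supp}\,f)$, where $f$ need not equal the level value, so the naive contradiction ``value $s$ along a boundary face versus value $1$ at a vertex'' can fail; the device of first proving $\mathrm{supp}\,f=K$ is exactly what repairs this by making all relevant interiors available. Making the two supporting claims precise — that a facet whose relative interior meets $\mathrm{int}(\mathrm{supp}\,f)$ must lie entirely in the interior (via convexity of the support), and that consequently every facet of $K$ is a genuine support facet — is where the care lies, and the hypothesis $n\ge2$ enters essentially here, since each facet of a level simplex other than $F$ must share a vertex with $F$.
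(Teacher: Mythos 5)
Your strategy is genuinely different from the paper's: the paper lifts everything to the hypograph of $\log f$, takes the convex hull $C=\conv\{B_0\times\{0\},\,B_{s_0}\times\{s_0\}\}$ inside it, and uses the slices of $C$ to place points of each intermediate level set $B_s$ in the relative interior of every facet of $B_{s_0}$ through the apex, which pins the apex of $B_s$ to that of $B_{s_0}$. Your route instead works at the level of supports and continuity, and its first half is correct: the reduction to showing all $L_s$ coincide, the pinning argument ($f\equiv s$ on $\partial L_s\cap\mathrm{int}(\mathrm{supp}\,f)$), the segment-into-the-interior continuity argument, and the conclusion $\mathrm{supp}\,f=K=L_s$ all go through (and you are right that $n\geq 2$ is essential: the lemma is actually false for $n=1$, e.g. $f=\chi_{[-1,0]}+e^{-x}\chi_{(0,\infty)}$, and the paper's own proof degenerates there too).

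The gap is in the closing step. Write $v_1,\dots,v_n$ for the vertices of $F$ and note that $F$ itself is a facet of $L_{s'}$ (since $\mathrm{aff}\,F\cap K=F$), with opposite vertex $q$; your choice $s'>f(p)$ gives $q\neq p$. You then assert that ``a facet of $L_{s'}$ distinct from $F'$ has its relative interior inside $\mathrm{int}\,K$''. Nothing proved so far justifies this, and as a statement about all such facets it is false: it is not yet excluded that $q$ lies on a facet $\tilde F_j=\conv(\{v_k\}_{k\neq j}\cup\{p\})$ of $K$ through $p$, in which case the facet $\conv(\{v_k\}_{k\neq j}\cup\{q\})$ of $L_{s'}$ lies entirely in $\tilde F_j\subseteq\partial K$, exactly where pinning gives nothing (on $\partial K=\partial(\mathrm{supp}\,f)$ the value of $f$ along $\partial L_{s'}$ need not be $s'$). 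So your diagnosis that proving $\mathrm{supp}\,f=K$ ``repairs'' the boundary issue is not quite right: it repairs it at level $s$, but $L_{s'}$ sits strictly inside $K$ and its facets may still lie on $\partial K$. The proof can be completed, but it needs one further idea, which is precisely the vertex bookkeeping the paper's convex-hull argument performs: since $\bigcap_{j=1}^n\tilde F_j=\{p\}$ and $q\neq p$, there is some $j_0$ with $q\notin\tilde F_{j_0}$; then the facet $G_{j_0}=\conv(\{v_k\}_{k\neq j_0}\cup\{q\})$ is contained in no facet of $K$ (any facet of $K$ containing all $v_k$, $k\neq j_0$, must be $\tilde F_{j_0}$ or $F$, and $q$ belongs to neither hyperplane), hence $\mathrm{relint}\,G_{j_0}$ meets $\mathrm{int}\,K$; since $n\geq2$, $G_{j_0}$ still contains some vertex $v_k$ of $F$, where $f=1$, and your pinning-plus-continuity contradiction applies to this one good facet. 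Without this step the argument, as written, does not close.
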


\begin{proof}
If $f$ is the characteristic function of a simplex then (i) and (ii) are trivially verified, since $\mathcal{A}_{t}(f)(x)=K\cap(x+K)$.
Assume that $f$ is a log-concave function with $\Vert f\Vert_\infty=1$, then $f(x)=e^{V(x)}$, with $V:\R^n\to[-\infty, 0]$ a concave function. Since
$$
\mathcal{A}_{t}(f)(0)=\{z\in\R^n\,:\,f(z)\geq\sqrt t\}=\{z\in\R^n\,:\,V(z)\geq\log(\sqrt t)\}
$$
we have that for every $s\in(-\infty,0]$, if $B_s$ denotes the set
$$
B_s=\{z\in\R^n\,:\,V(z)\geq s\},
$$
then
\begin{enumerate}[(i)]
\item $B_s$ is an $n$-dimensional simplex
\item $B_0$ contains a facet of $B_s$.
\end{enumerate}

%Let us assume that $f$ is not the characteristic function of a simplex. Then there exists some $t_0$ such that $A_{0,1}(f)$ is strictly contained in $A_{0,t_0}(f)$. Consequently, if $s_{0}=\log\sqrt t_0$, $B_0$ is strictly contained in $B_{s_0}$ and also $\cl(B_0)$ is contained in $B_{s_0}$.
Let $t_0<1$ and $s_0=\log\sqrt t_0<0$. Since $V$ is concave, the set
$$
G=\{(x,y)\in\R^{n+1}\,:\, V(x)\geq y\}
$$
is convex. Thus, $G$ contains $C=\textrm{conv}\{B_0\times\{0\},B_{s_0}\times\{s_0\}\}$. Consequently, for every $s\in[s_0,0)$ we have that
$$
\{z\in\R^n\,:\,(z,s)\in C\}\subseteq B_s\subseteq B_{s_0}.
$$

Since $B_0$ contains a facet $F$ of $B_{s_0}$ we have that for every $s\in [s_0,0)$, $F\times\{s\}$ is contained in $C$ and consequently $F$ is a facet of $B_s$ for every $s\in[s_0,0)$.

Let $P_{s_0}$ be the vertex of $B_{s_0}$ that is not contained in $F$ and let $P_1,\dots,P_n$ be the vertices of $B_{s_0}$ that are contained in $F$. For any choice of $i_1,\dots, i_{n-1}\in\{1,\dots, n\}$ we have that
$$
\textrm{conv}\{P_{i_1}\times\{0\},\dots,P_{i_{n-1}}\times\{0\}, P_{s_0}\times\{s_0\}\}\subseteq C
$$
and consequently, if $s\in (s_0,0)$ we can write $s=(1-(n-1)\lambda)s_0$ for some $\lambda$, and the point
$$
((1-(n-1)\lambda)P_{s_0}+\lambda P_{i_1}+\dots+\lambda P_{i_{n-1}})\times\{s\}
$$
belongs to $C$ and consequently to $B_s$. Since the point $(1-(n-1)\lambda)P_{s_0}+\lambda P_{i_1}+\dots+\lambda P_{i_{n-1}}$ belongs to the relative interior of the facet of $B_{s_0}$ spanned by $P_{s_0},P_{i_1},\dots,P_{i_{n-1}}$ we have that the facet $F$ spanned by $P_{1},\dots,P_n$ must be a facet of the simplex $B_s$, the vertex $P_{s}$ not contained in $F$ must belong to $B_{s_0}$ and, at the same time $B_s$ must contain some points lying in the relative interior of the other facets of $B_{s_0}$. Thus, $P_s$ must be equal to $P_{s_0}$ and for every $s\in(-\infty,0)$ the simplex $B_s$ must be the same.

Thus, for every $t\in(0,1)$ the simplex $\mathcal{A}_{t}(f)(0)$ is the same simplex and $f$ is the characteristic function of a simplex.
\end{proof}

Now we are able to prove the characterization of the equality cases in (\ref{eq:f=-g}).

\begin{proof}[Proof of Theorem \ref{thm:g=f-} (equality)]
We can assume, without loss of generality, that $\Vert f\Vert_\infty=1$. It is clear that if $f$ is a characteristic function of a simplex then the inequality becomes Rogers-Shephard inequality (\ref{RSonebody}) for a simplex, which is an equality.

Let us show that equality in (\ref{eq:f=-g}) implies that $f$ is the characteristic function of a simplex. We recall that equality holds in (\ref{eq:f=-g}) if and only if equality holds
in
$$
\int_0^1|\mathcal{C}_{0,t}(f)|dt
\stackrel{(a)}{\le}
\int_0^1|\mathcal{A}_{t^2}(f)(0)-\mathcal{A}_{t^2}(f)(0)|
\stackrel{(b)}{\le}
{2n\choose n}\int_0^1|\mathcal{A}_{t^2}(f)(0)|dt.
$$
Since $f$ is continuous when restricted to its support, $\mathcal{A}_{t}(f)(0)$ is a convex body for every $t\in(0,1)$ and the functions $|\mathcal{C}_{0,t}(f)|$, $|\mathcal{A}_{t^2}(f)(0)+_0(-\mathcal{A}_{t^2}(f)(0))|$ and $|\mathcal{A}_{t^2}(f)(0)|$ are continuous on $t\in (0,1]$ and so, equality in (a) and (b) implies that for every $t\in(0,1]$ there is equality in
$$
|\mathcal{C}_{0,t}(f)|\le|\mathcal{A}_{t^2}(f)(0)-\mathcal{A}_{t^2}(f)(0)|
\le{2n\choose n}|\mathcal{A}_{t^2}(0)|.
$$

First of all notice that, from the characterization of the equality cases in Rogers-Shephard inequality (\ref{RSonebody}) we have equality in the right hand side inequality if and only if for every $t\in(0,1)$ $\mathcal{A}_{t^2}(f)(0)$ is a simplex. We will see that $\mathcal{A}_{1}(f)(0)$ contains a facet of $\mathcal{A}_{t^2}(f)(0)$ for every $t\in(0,1)$.

Let us assume that there exists $t\in(0,1)$, and $P, Q$ vertices
of the simplex $\mathcal{A}_{t^2}(f)(0)$ such that $P, Q\notin\mathcal{A}_{1}(f)(0)$. We are going to see that in such case the set $\mathcal{C}_{0,t}(f)$ is strictly contained in $\mathcal{A}_{t^2}(f)(0)-\mathcal{A}_{t^2}(f)(0)$ and, since $f$ is continuous, these sets do not have the same volume.

Since $P$ and $Q$ are vertices of $\mathcal{A}_{t^2}(f)(0)$ we have that $f(P)=f(Q)=t$.
Let us take $\varepsilon>0$ such that $(t+\varepsilon)^2\leq t-\varepsilon$.
$\mathcal{A}_{1}(f)(0)$ does not contain $P$. Thus there exists a hyperplane $H_P$
separating them, parallel to the opposite facet to $P$ of $\mathcal{A}_{t^2}(f)(0)$. Notice that $H_P$ intersects
$\mathcal{A}_{t^2}(f)(0)$ and we can take $H_P$ such that $t\leq f(z)\leq t+\varepsilon$ for every $z\in \mathcal{A}_{t^2}(f)(0)$ in the same side of $H_P$ as $P$. Analogously, we take a hyperplane $H_Q$ separating $Q$ from $\mathcal{A}_{1}(f)(0)$ parallel to the opposite facet of $Q$ of $\mathcal{A}_{t^2}(f)(0)$ and such that $t\leq f(z)\leq t+\varepsilon$ for every $z\in \mathcal{A}_{t^2}(f)(0)$ in the same side of $H_Q$ as $Q$.

Thus, we can take $x=\lambda(P-Q)$ for some $0<\lambda<1$ such that $\mathcal{A}_{t^2}(f)(0)\cap(x+\mathcal{A}_{t^2}(f)(0))\neq\emptyset$ and for every $z\in \mathcal{A}_{t^2}(f)(0)\cap(x+\mathcal{A}_{t^2}(f)(0))$
\begin{itemize}
\item$t\le f(z)\le t+\varepsilon$
\item$t\le f(z-x)\le t+\varepsilon$
\end{itemize}

Notice that since $\mathcal{A}_{t^2}(f)(0)\cap(x+\mathcal{A}_{t^2}(f)(0))\neq\emptyset$, $x\in \mathcal{A}_{t^2}(f)(0)-\mathcal{A}_{t^2}(f)(0)$. On the other hand for every $z\in\mathcal{A}_{t^2}(f)(0)\cap(x+\mathcal{A}_{t^2}(f)(0))$ we have that $f(z)f(z-x)\leq(t+\varepsilon)^2\leq t-\varepsilon<t$ and for every $z\notin \mathcal{A}_{t^2}(f)(0)\cap(x+\mathcal{A}_{t^2}(f)(0))$, $f(z)f(z-x)<t$. Thus, $\mathcal{A}_t(f)(x)=\emptyset$ and $x\notin\mathcal{C}_{0,t}(f)$. Consequently, $x\in(\mathcal{A}_{t^2}(f)(0)-\mathcal{A}_{t^2}(f)(0)) \backslash \mathcal{C}_{0,t}(f)$ and $|\mathcal{C}_{0,t}(f)|<|\mathcal{A}_{t^2}(f)(0)-\mathcal{A}_{t^2}(f)(0)|$.

Thus, if there is equality in (\ref{eq:f=-g}) $\mathcal{A}_1(f)(0)$ contains a facet of every simplex $\mathcal{A}_t(f)(0)$ and by the previous lemma $f$ is the characteristic function of a simplex.
\end{proof}

We now show the characterization of the equality case in \eqref{eq:RSfunctions}.

\begin{proof}[Proof of Theorem \ref{RSfunctions} (Equality)]
Without loss of generality we assume that $||f||_{\infty}=||g||_{\infty}=1$.

Then, equality holds in \eqref{eq:RSfunctions} if and only if it holds equality on each inequality
all along the proof of \eqref{eq:RSfunctions}. In particular, we have that for almost every $t\in[0,1]$
$$
M_t|\mathcal{C}_{0,t}|={2n\choose n}\int_{\R^n}|\mathcal{A}_t(x)|dx.
$$
Notice that by Lemma \ref{ContinuityM_t} the function $M_t$ is continuous on $(0,1)$, the function $|\mathcal{C}_{0,t}|$ is continuous on $(0,1]$ since it is the volume of the level sets of the log-concave function $f\star g$, and the function $\int_{\R^n}|\mathcal{A}_t(x)|dx$ is also continuous on $(0,1]$ as a consequence of the dominated convergence theorem, since for every $t_0\in(0,1]$ and any $x\in\R^n$, $|\mathcal{A}_t(x)|$ is continuous on $t_0$ and, in a neighborhood of $t_0$, $(t_0-\varepsilon, t_0+\varepsilon)$ we have,
$$
|\mathcal{A}_t(x)|\leq|\mathcal{A}_{t_0-\varepsilon}(x)|.
$$
Thus, we have
$$
M_t|\mathcal{C}_{0,t}|={2n\choose n}\int_{\R^n}|\mathcal{A}_t(x)|dx
$$
for every $t\in(0,1)$ and, consequently, for almost every $\theta\in[0,1]$
$$
(1-\theta^{\frac{1}{n}})(-x_0(t)+\mathcal{C}_{0,t})=-x_0(t)+\mathcal{C}_{\theta,t}.
$$
As a consequence of Corollary \ref{inclusionrelation} we have equality in the last equality not only for almost every $\theta\in[0,1]$ but for every $\theta\in[0,1)$. Consequently, if $x_0(t)$ is such that $M_t=|\mathcal{A}_t(x_0)|$ we have that
\begin{enumerate}[(i)]
\item for every $\theta, t\in(0,1)$, $(1-\theta^{\frac{1}{n}})(-x_0(t)+\mathcal{C}_{0,t})=-x_0(t)+\mathcal{C}_{\theta,t}$,
\item
$$
\max_{x_0\in\R^n}\int_{\R^n}\int_0^{f\star g(x)}|\mathcal{A}_{t}(x_0)|dtdx  =
\int_{\R^n}\int_0^{f\star g(x)} \max_{x_0\in\R^n}|\mathcal{A}_{t}(x_0(t))|dtdx
$$
\item for every $x,z\in\R^n$,
$$
\min\{f\star g(x),f(z)g(x_0(t)-z)\}=f\star g(x)\,f(z)g(x_0(t)-z).
$$
\end{enumerate}

Notice that taking $\theta$ tending to 1 in (i) we have that for every $t\in(0,1)$ the maximum of $|\mathcal{A}_t(x)|$ is only attained at $x_0(t)$. Besides, by the continuity of $M_t$, the continuity of $|\mathcal{A}_t(x_0)|$ in $t$ and in $x_0$, (ii) holds if and only if $x_0(t)$ is the same for every $t\in(0,1)$, and thus we may suppose without loss of generality that $x_0(t)=0$. Thus
\begin{enumerate}[(i)]
\item for every $\theta, t\in(0,1)$, $(1-\theta^{\frac{1}{n}})\mathcal{C}_{0,t}=\mathcal{C}_{\theta,t}$,
\item for every $t\in(0,1)$, $\max_{x_0\in\R^n}|\mathcal{A}_{t}(x_0)|  = |\mathcal{A}_{t}(0)|$
\item for every $x,z\in\R^n$, $\min\{f\star g(x),f(z)g(-z)\}=f\star g(x)\,f(z)g(-z)$.
\end{enumerate}

First of all notice that if $g(x)=\chi_K(x)$ is the characteristic function of a convex body, then
$$
\mathcal{A}_t(x)=\mathcal{A}_{t^2}(f)(0)\cap(x-K)
$$
and
$$
\mathcal{C}_{\theta,t}=\mathcal{A}_{t^2}(f)(0)+_\theta K,
$$
where $\mathcal{A}_{t^2}(f)(0)+_\theta K$ denotes the $\theta$-convolution of convex bodies defined in \eqref{ConvolutionBodies}. Thus, since the $(\theta,t)$-convolution bodies of the functions are the $\theta$-convolution bodies of some convex bodies we have equality in (i) if and only if for every $t\in(0,1)$ $\mathcal{A}_{t^2}(f)(0)=-K$ is a simplex and, consequently $f(x)=g(-x)$ is the characteristic function of a simplex.

We will prove that in the equality case necessarily one of the functions is the characteristic function of a convex body.

Condition
(iii) occurs if and only if $f\star g(x)$ or $f(z)g(-z)$ equals $0$ or $1$, for every $x, z\in\R^n$. First, assume that  $f\star g(x)=1$ for every $x\in \textrm{supp }f+\textrm{supp }g$. Then for every $x\in \textrm{supp }f+\textrm{supp }g$, $\mathcal{A}_1(f)(0)\cap(x-\mathcal{A}_1(g)(0))\neq\emptyset$ and so
$$\mathcal{A}_1(f)(0)+\mathcal{A}_1(g)(0)=\textrm{supp }f+\textrm{supp }g.$$
Consequently $f$ and $g$ are characteristic functions.

Let us now assume that there exists $x\in \textrm{supp }f+\textrm{supp }g$ such that $f\star g(x)<1$. Then for every $z\in\R^n$, $f(z)g(-z)$ equals 0,1 and then, for every $t\in(0,1]$, $\mathcal{A}_t(0)=\mathcal{A}_1(0)$. In such case the function $M_t$ is constant in $(0,1]$. In particular it is also continuous on $t=1$ and then (i) also holds for $t=1$.

Notice that if $|\mathcal{A}_1(0)|=|\mathcal{A}_t(0)|=0$, then for every $t\in(0,1)$ we have that for every $\theta\in(0,1)$
$$
\mathcal{C}_{\theta,t}=\{x\in supp f+ supp g\,:\,\mathcal{A}_{t}(x)\neq\emptyset\},
$$
contradicting (i). Thus, if we have equality in \eqref{eq:RSfunctions}, $|\mathcal{A}_1(0)|>0$.

Now, if (i) holds, we fix $t\in(0,1]$ and take $x\in supp(f)+supp(g)$. If $x\notin\mathcal{C}_{0,t}$ then $\mathcal{A}_t(x)=\emptyset$. If $x\in\mathcal{C}_{0,t}$ then there exists $\theta_x\in[0,1]$ such that $x\in\partial\mathcal{C}_{\theta_x,t}$ and $x=(1-\theta_x^{\frac{1}{n}})y$ for some $y\in\partial\mathcal{C}_{0,t}$. Thus, we have equality in
\begin{align*}
\theta_x^\frac{1}{n}|\mathcal{A}_{t}(0)|^{\frac{1}{n}}
&=|\mathcal{A}_{t}(x)|^{\frac{1}{n}}
=|\mathcal{A}_{t}(\theta_x^{\frac{1}{n}}0+(1-\theta_x^{\frac{1}{n}})y)|^{\frac{1}{n}}
\\
&\geq\theta_x^{\frac{1}{n}}|\mathcal{A}_{t}(0)|^{\frac{1}{n}}+(1-\theta_x^\frac{1}{n})|\mathcal{A}_t(y)|^\frac{1}{n}
\geq\theta_x^{\frac{1}{n}}|\mathcal{A}_{t}(0)|^{\frac{1}{n}}.
\end{align*}
and, by the equality cases in Brunn-Minkowski inequality, $\mathcal{A}_t(x)$ is homothetic to $\mathcal{A}_{t}(0)$. Thus, for every
$x\in\R^n$ and $t\in(0,1]$, $\mathcal{A}_{t}(x)$ is either empty or a homothetic copy of $\mathcal{A}_{t}(0)$. Moreover, if we particularize in $t=1$,
we have that
\begin{equation*}
\begin{split}
\mathcal{A}_{1}(x)=&\{z:f(z)g(x-z)=1\}=\{z:f(z)=1\}\cap(x+\{z:g(-z)=1\})\quad\text{and}\\
\mathcal{A}_{1}(0)=&\{z:f(z)g(-z)=1\}=\{z:f(z)=1\}\cap\{z:g(-z)=1\}
\end{split}
\end{equation*}
and for any $\theta\in[0,1]$ then $\mathcal{C}_{\theta,1}$ is the $\theta$-convolution of the convex bodies
$$
\mathcal{C}_{\theta,1}=\mathcal{A}_{1}(f)(0)+_\theta\mathcal{A}_{1}(g)(0).
$$

Thus, by Proposition 2.10 in \cite{AJV}, the convex bodies $\mathcal{A}_{1}(f)(0)$ and $-\mathcal{A}_{1}(g)(0)$ are the same simplex
$\mathcal{A}_1(0)$.

Let us now assume that none of the functions $f, g$ is a characteristic function,
and we will find a contradiction.

Since $f$ and $g$ are not characteristic functions there exist $0<t_1,t_2<1$ and $z_1,z_2\in\R^n$ such that $t_1\leq f(z_1)<1$ and $t_2\leq g(-z_2)<1$.
Let us denote by $F_1$ a facet of $\mathcal{A}_1(0)$, with outer normal vector $u_1$ and contained in the hyperplane $\{x\in\R^n\,:\,\langle x,u_1\rangle=c_1\}$,
such that $\mathcal{A}_1(0)\subseteq\{x\in\R^n\,:\,\langle x,u_1\rangle\leq c_1\}$ and $\langle z_1,u_1\rangle>c_1$.
Analogously, let $F_2$ be a facet of $\mathcal{A}_1(0)$, with outer normal vector $u_2$ and contained in the hyperplane $\{x\in\R^n\,:\,\langle x,u_2\rangle=c_2\}$,
such that $\mathcal{A}_1(0)\subseteq\{x\in\R^n\,:\,\langle x,u_2\rangle\leq c_2\}$ and $\langle z_1,u_2\rangle>c_2$.

Observe that the log-concavity of $f$ and $g$ imply that
$\conv\{z_1,\mathcal{A}_1(0)\}\subset \mathcal{A}_{t_1^2}(f)(0)$ and
$\conv\{z_2,\mathcal{A}_1(0)\}\subset \mathcal{A}_{t_2^2}(\bar g)(0)$.

If $F_1=F_2$, then
$$
(\conv\{z_1,\mathcal{A}_1(0)\}\cap\conv\{z_2,\mathcal{A}_1(0)\})\backslash \mathcal{A}_1(0)\neq\emptyset.
$$
Let $z_0$ be a point in this intersection. Then $z_0\in\mathcal{A}_{t_1t_2}(0)$ since
%and $x_0$ be a vector such that
%$\mathcal{A}_1(0)\cap(x_0+\mathcal{A}_1(0))=\emptyset$ and $z_0\in x_0+\mathcal{A}_1(0)$. Then
$$
t_1t_2\leq f(z_0)g(-z_0)<1.
$$
However, this is not possible, since $\mathcal{A}_{t_1t_2}(0)=\mathcal{A}_1(0)$.

If $F_1\neq F_2$, let $x$ be a vector with a small enough norm and parallel
to the only edge contained in all $n-1$ facets $F_3,\dots,F_{n+1}$ of $\mathcal{A}_1(0)$ different from $F_1$ and $F_2$
and pointing from $F_2$ to $F_1$ such that $z_1\notin x+\mathcal{A}_1(0)$ and $\mathcal{A}_1(0)\cap(x+F_2)\neq\emptyset$,
$$
(\conv\{z_1,\mathcal{A}_1(0)\}\backslash \{z_1,\mathcal{A}_1(0)\})\cap(x+F_1)\neq\emptyset,
$$
and
$$
\mathcal{A}_1(0)\cap(x+\conv\{z_2,\mathcal{A}_1(0)\}\backslash \{z_2,\mathcal{A}_1(0)\})\neq\emptyset.
$$
Observe that for any point $z$ in the first intersection, it holds
$t_1t_2<t_1\leq f(z)g(x-z)$, whereas if $z^\prime$ is on the second, then $t_1t_2<t_2\leq f(z^\prime)g(x-z^\prime)$ and, since $x$ points from $F_2$ to $F_1$, $z^\prime$ does not belong to $x+\mathcal{A}_1(0)$.

Besides, for any other facet $F_i\cap(x+F_i)\neq\emptyset$ and for any point $z^{\prime\prime}$ in $F_i\cap(x+F_i)$ we have $f(z^{\prime\prime})g(x-z^{\prime\prime})=1$. These three types
of points belong to the set $A_{t_0t_1}(x)$, which we have proved that
is a homothetic copy of the simplex $\mathcal{A}_{t_1t_2}(0)=\mathcal{A}_1(0)$. Then, since for every facet $F_i$ there exist points in $(x+F_i)\cap A_{t_1t_2}(x)$ we have that
$x+\mathcal{A}_1(0)\subseteq A_{t_1t_2}(x)$ and since the points $z^{\prime}\in A_{t_1t_2}(x)\backslash(x+\mathcal{A}_1(0))$ the inclusion is strict. Thus
 $$|\mathcal{A}_{t_1t_2}(0)|=|\mathcal{A}_1(0)|=|x+\mathcal{A}_1(0)| < |A_{t_1t_2}(x)|,$$
contradicting (ii).

Thus, $f$ or $g$ must be a characteristic function and, consequently, $f(x)=g(-x)$ is the characteristic function of a simplex.
\end{proof}

The proof of the equality cases in (\ref{eq:RSsurfacefunctions}) follows the same lines.

\begin{proof}[Proof of Theorem \ref{thm:RSsurfacefunctions} (Equality)]
Without loss of generality we assume that $||f||_{\infty}=||g||_{\infty}=1$.

Then, equality holds in \eqref{eq:RSsurfacefunctions} if and only if it holds equality on each inequality
all along the proof of \eqref{eq:RSsurfacefunctions}. More particularly, if $x_0(t)$ is such that $M_{1,t}=W_1(\mathcal{A}_t(x_0))$ we have that
\begin{enumerate}[(i)]
\item for every $\theta, t\in(0,1)$,
$(1-\theta^{\frac{1}{n-1}})(-x_0(t)+\mathcal{C}_{0,t}^{n-1})=-x_0(t)+\mathcal{C}_{\theta,t}^{n-1}$,
\item
$$
\max_{x_0\in\R^n}\int_{\R^n}\int_0^{f\star g(x)}W_1(\mathcal{A}_{t}(x_0))dtdx
=
\int_{\R^n}\int_0^{f\star g(x)} \max_{x_0\in\R^n}W_1(\mathcal{A}_{t}(x_0(t)))dtdx
$$
\item for every $x\in\R^n, E\in\mathbb{A}_{n,1}$,
$$
\min\{f\star g(x),\max_{z\in E}f(z)g(x_0(t)-z)\}=f\star g(x)\max_{z\in E}f(z)g(x_0(t)-z)
$$
\item for every $\theta\in S^{n-1}$ and for every $z,w\in\theta^\perp$, $r,s\in\R$,
$$
\max_{s\in\R}f_z(s)g_w(r-s)=\max_{s\in\R}\min\{f_z(s)\Vert g_w\Vert,g_w(r-s)\Vert f_z\Vert_\infty\}.
$$
\end{enumerate}

Taking $\theta$ tending to 1 in (i) we have that for every $t\in(0,1]$ the maximum of $W_1(\mathcal{A}_t(x))$ is only attained at $x_0(t)$. Besides,
(ii) holds if and only if $x_0(t)$ is the same for every $t$, and thus we may suppose without loss of generality that $x_0(t)=0$. Thus
\begin{enumerate}[(i)]
\item for every $\theta, t\in(0,1)$, $(1-\theta^{\frac{1}{n-1}})\mathcal{C}_{0,t}^{n-1}=\mathcal{C}_{\theta,t}^{n-1}$,
\item for every $t\in(0,1]$, $\max_{x_0\in\R^n}W_1(\mathcal{A}_{t}(x_0))  =W_1(\mathcal{A}_{t}(0))$
\item for every $x\in\R^n, E\in\mathbb{A}_{n,1}$,
$$
\min\{f\star g(x),\max_{z\in E}f(z)g(-z)\}=f\star g(x)\max_{z\in E}f(z)g(-z),
$$
\item for every $\theta\in S^{n-1}$ and for every $z,w\in\theta^\perp$, $r,s\in\R$,
$$
\max_{s\in\R}f_z(s)g_w(r-s)=\max_{s\in\R}\min\{f_z(s)\Vert g_w\Vert,g_w(r-s)\Vert f_z\Vert_\infty\}.
$$

\end{enumerate}

As in the previous case, we have that if $g(x)=\chi_K(x)$ is the characteristic function of a convex body, then the $(n-1)$-th $(\theta,t)$-convolution bodies of the functions are the $(n-1)$-th $\theta$-convolution bodies of some convex bodies and, as it was proved in \cite{AGJ}, if $n\geq 3$ we have equality in (i) if and only if for every $t\in(0,1)$ $\mathcal{A}_{t^2}(f)(0)=-K$ is a simplex and, consequently $f(x)=g(-x)$ is the characteristic function of a simplex.

We will prove that in the equality case necessarily one of the functions is the characteristic function of a convex body.

Condition
(iii) occurs if and only if $f\star g(x)$ or $\max_{z\in E}f(z)g(-z)$ equals $0$ or $1$, for every $x\in\R^n, E\in\mathbb{A}_{n,1}$. As we have seen in the previous case, if  $f\star g(x)=1$ for every $x\in \textrm{supp }f+\textrm{supp }g$ then $f$ and $g$ are characteristic functions.

Let us now assume that there exists $x\in \textrm{supp }f+\textrm{supp }g$ such that $f\star g(x)<1$. Then for every $E\in\mathbb{A}_{n,1}$ $\max_{z\in E}f(z)g(-z)$ equals 0 or 1. Consequently, for every $t\in(0,1]$ $\mathcal{A}_t(0)=\mathcal{A}_1(0)$ because otherwise there exists some $t\in(0,1)$ and some $z\in\R^n$ such that $t\leq f(z)g(-z)<1$ and since $z\notin \mathcal{A}_1(0)$ there exist 1-dimensional affine subspaces passing through $z$ and not intersecting $\mathcal{A}_1(0)$ and for all such subspaces we would have $t\leq\max_{z\in E}f(z)g(-z)<1$. Like before, in such case (i) also holds for $t=1$.

Notice that if $W_1(\mathcal{A}_1(0))=W_1(\mathcal{A}_t(0))=0$, then for every $t\in(0,1)$ we have that for every $\theta\in(0,1)$
$$
\mathcal{C}_{\theta,t}=\{x\in supp f+ supp g\,:\,\mathcal{A}_{t}(x)\neq\emptyset\},
$$
contradicting (i). Thus, if we have equality in \eqref{eq:RSfunctions}, $W_1(\mathcal{A}_1(0))>0$.

Now, if (i) holds, we fix $t\in(0,1]$ and take $x\in supp(f)+supp(g)$. If $x\notin\mathcal{C}_{0,t}$ then $\mathcal{A}_t(x)=\emptyset$. If $x\in\mathcal{C}_{0,t}$ then there exists $\theta_x\in[0,1]$ such that $x\in\partial\mathcal{C}_{\theta_x,t}$ and $x=(1-\theta_x^{\frac{1}{n}})y$ for some $y\in\partial\mathcal{C}_{0,t}$. Thus, we have equality in
\begin{align*}
\theta_x^\frac{1}{n}W_1(\mathcal{A}_{t}(0))^{\frac{1}{n-1}}
&=W_1(\mathcal{A}_{t}(x))^{\frac{1}{n-1}}=W_1(\mathcal{A}_{t}(\theta_x^{\frac{1}{n}}0+(1-\theta_x^{\frac{1}{n}})y))^{\frac{1}{n-1}}
\\
&\geq\theta_x^{\frac{1}{n}}W_1(\mathcal{A}_{t}(0))^{\frac{1}{n-1}}+(1-\theta_x^\frac{1}{n})W_1(\mathcal{A}_t(y))^\frac{1}{n-1}
\\
&\geq\theta_x^{\frac{1}{n}}W_1(\mathcal{A}_{t}(0))^{\frac{1}{n-1}},
\end{align*}
and, by the equality cases in Brunn-Minkowski inequality for querma\ss integrtals, if $n\geq 3$ then $\mathcal{A}_t(x)$ is homothetic to $\mathcal{A}_{t}(0)$. Thus, for every
$x\in\R^n$ and $t\in(0,1]$, $\mathcal{A}_{t}(x)$ is either empty or a homothetic copy of $\mathcal{A}_{t}(0)$. Particularizing at $t=1$,
we have that for any $\theta\in[0,1]$ $\mathcal{C}_{\theta,1}^{n-1}$ is the $(n-1)$-th $\theta$-convolution of the convex bodies
$$
\mathcal{C}_{\theta,1}^{n-1}=\mathcal{A}_{1}(f)(0)+_{n-1,\theta}\mathcal{A}_{1}(g)(0).
$$

Thus, if (i) holds then by the characterization of the equality cases in \cite{AGJ} $\mathcal{A}_{1}(f)(0)$ and $-\mathcal{A}_{1}(g)(0)$ are the same simplex
$\mathcal{A}_1(0)$.

Now, if we assume that neither of the functions $f, g$ is a characteristic function, with the same proof as before we find a contradiction.
Thus, $f$ or $g$ must be a characteristic function and, consequently, $f(x)=g(-x)$ is the characteristic function of a simplex.
\end{proof}

\section{Colesanti's inequality for two functions}\label{Colesanti}

In this section we show that Colesanti's functional version of Rogers-Shephard inequality (\ref{RSColesanti}) can be extended to the case in which we consider any pair of functions  and not necessarily $g(x)=\bar f(x)$. Let us recall that similar results were obtained in \cite{AEFO}.

\begin{proof}[Proof of Theorem \ref{th_Colfg}.]
For any  $z\in\R^n$ such that $f\oplus g(z)>0$ let $x_z, y_z\in\R^n$ be such that $f\oplus g(z)=\sqrt{f(x_z)g(y_z)}$
with $2z=x_z+y_z$. Notice that $x_z$ and $y_z$ exist by Remark \ref{maximumAsplund}, since $f\oplus g(z)=\sqrt{f\star g(2z)}$. Using the log-concavity of $f$ and $g$,
\begin{equation}\label{ineqinColesanti}
f(x)g(z-x)\geq \sqrt{f(x_z)g(y_z)}
\sqrt{f(2x-x_z)g(x_z-2x)}
\end{equation}
for every $x\in\R^n$. Integrating in $x\in\R^n$
\begin{align*}
f*g(z)&\geq \sqrt{f(x_z)g(y_z)}
\int_{\R^n}\sqrt{f(2x-x_z)\bar g(2x-x_z)}\,dx
\\
&=
\frac{1}{2^n}f\oplus g(z)\int_{\R^n}\sqrt{f(x)\bar g(x)}dx.
\end{align*}
Integrating in $z\in\R^n$ we finally obtain
\[
\int_{\R^n}f(x)dx\int_{\R^n}g(x)dx\geq
\frac{1}{2^n}\int_{\R^n}f\oplus g(z)dz\int_{\R^n}\sqrt{f(x)\bar g(x)}dx,
\]
as  wanted.

Let us now characterize the equality cases. If (i) and (ii) are satisfied, then there exists $p\in\R^n$ such that
$$
\textrm{supp }f=\textrm{supp }\bar g=p+C
$$
and for every $z\in\textrm{supp }f+\textrm{supp }g$ we have that
$$
\textrm{supp }f\cap(z-\textrm{supp }g)=(p+C)\cap(z+p+C)=p+C\cap(z+C)
$$
and since $C$ has a simplicial section, this equals
$$
\textrm{supp }f\cap(z-\textrm{supp }g)=p^\prime+C
$$
for some $p^\prime\in\R^n$. We will show that $p^\prime=\frac{p}{2}+\frac{x_z}{2}$ for $x_z$ such that
$f\oplus g(z)=\sqrt{f(x_z)g(2z-x_z)}$. Notice that, as before, such $x_z$ exists, because of the continuity properties of $f$ and $g$ on their supports.
In such case we would have that for every $z\in \textrm{supp }f+\textrm{supp }g$
$$
\textrm{supp }f\cap(z-\textrm{supp }g)=\frac{x_z}{2}+\frac{1}{2}(\textrm{supp }f\cap\textrm{supp }\bar g)
$$
and then for every $z\in \textrm{supp }f+\textrm{supp }g$ and every $x\in \textrm{supp }f\cap(z-\textrm{supp }g)$
\begin{itemize}
\item $2x-x_z\in\textrm{supp }f$, and
\item $x_z-2x\in\textrm{supp }g$.
\end{itemize}
Then, inequality (\ref{ineqinColesanti}) holds with equality for the functions
$f(x)=c_1 e^{-\langle a,x\rangle}$, $x\in\textrm{supp }f$ and $g(x)=c_2e^{-\langle b,x\rangle}$, $x\in\textrm{supp }g$
 for every $x,z\in\R^n$.

In order to show that for every $z\in\textrm{supp }f+\textrm{supp }g$ we have $p^\prime=\frac{p}{2}+\frac{x_z}{2}$, notice that
$$2(\textrm{supp }f+\textrm{supp }g)= 2C-2C=C-C=\textrm{supp }f+\textrm{supp }g$$
and so, $2z\in\textrm{supp }f+\textrm{supp }g$ and for every $x\in \textrm{supp }f\cap(2z-\textrm{supp }g)$
\begin{align*}
\sqrt{f(x)g(2z-x)}
&=\sqrt{c_1c_2}e^{-\langle a,\frac{x}{2}\rangle}e^{-\langle b,z-\frac{x}{2}\rangle}
\\
&=\sqrt{c_1c_2}e^{-\langle b,z\rangle}e^{-\langle a-b,\frac{x}{2}\rangle}
\end{align*}
and so,
\begin{align*}
f\oplus g(z)
&=\sqrt{c_1c_2}e^{-\langle b,z\rangle}e^{-\min\{\langle a-b,\frac{x}{2}\rangle:{x\in\textrm{supp }f\cap(2z-\textrm{supp }g)}\}}
\\
&=\sqrt{c_1c_2}e^{-\langle b,z\rangle}e^{-\min\{\langle a-b,\bar{x}\rangle:{\bar x\in\frac{p}{2}+C\cap (z+C)}\}}.
\end{align*}
Since $(C\cap z+C)=-p+\textrm{supp }f\cap(z-\textrm{supp }g)=-p+p^\prime+C$ we have that
$$
\min_{\bar x\in\frac{p}{2}+(C\cap z+C)}\langle a-b,\bar x\rangle
=
\min_{\bar x\in-\frac{p}{2}+p^\prime+C}\langle a-b,\bar x\rangle
$$
and since $\langle a-b, x\rangle\geq 0$ for every $x\in C$, the minimum is attained when $\bar x=\frac{x_z}{2}=-\frac{p}{2}+p^\prime$. Thus $p^\prime=\frac{p}{2}+\frac{x_z}{2}$.

Let us now prove that
(i) and (ii) are necessary conditions for equality in (\ref{eq:Colfg}) to hold.
We can assume, without loss of generality, that $f(x_0)=\Vert f\Vert_\infty=\Vert g\Vert_\infty=g(y_0)=1$ and let us write $f(x)=e^{-u(x)}$ and $g(x)=e^{-v(x)}$ for some convex functions $u,v$. Notice that, since $\Vert f\Vert_\infty=\Vert g\Vert_\infty=1$, $u$ and $v$ take values in $[0,+\infty]$.

Equality in (\ref{eq:Colfg}) happens if and only if for every $z\in \textrm{supp }f+\textrm{supp }g$ and every $x\in \textrm{supp }f\cap(z-\textrm{supp }g)$ we have equality in (\ref{ineqinColesanti}). Thus, for every $z\in \textrm{supp }f+\textrm{supp }g$ the support of both functions as functions of $x\in\R^n$ must be the same and so
$$
\textrm{supp }f\cap(z-\textrm{supp }g)
=
\frac{x_z}{2}+\frac{1}{2}(\textrm{supp }f\cap(-\textrm{supp }g)),
$$
where $x_z$ is such that $f\oplus g(z)=\sqrt{f(x_z)g(2z-x_z)}$.
Notice that in particular this implies that $\textrm{supp }f\cap(-\textrm{supp }g)$ is full-dimensional, since we are assuming that $\textrm{supp }f$ and $\textrm{supp }g$ are full-dimensional and then there exists some $z\in\textrm{supp }f+\textrm{supp }g$ such that $\textrm{supp }f\cap(z-\textrm{supp }g)$ is full-dimensional and then $(\textrm{supp }f\cap(-\textrm{supp }g))$ is full-dimensional.

Besides, for every $z\in \textrm{supp }f+\textrm{supp }g$ we have equality in (\ref{ineqinColesanti}) for every $x\in \textrm{supp }f\cap(z-\textrm{supp }g)$ if and only if
\begin{align*}
u\left(\frac{1}{2}(2x-x_z)+\frac{x_z}{2}\right)&=\frac{1}{2}(u(2x-x_z)+u(x_z))
\\
v\left(\frac{1}{2}(x_z-2x)+\frac{y_z}{2}\right)&=\frac{1}{2}(v(x_z-2x)+v(y_z))
\end{align*}
for every $x\in \textrm{supp }f\cap(z-\textrm{supp }g)$, where $x_z,y_z\in\R^n$ are such that $f\oplus g(z)=\sqrt{f(x_z)g(y_z)}$. In particular, for $z_0=\frac{x_0+y_0}{2}$ we have that $f\oplus g(z_0)=\sqrt{f(x_0)g(y_0)}$ and so
\begin{align*}
u\left(\frac{1}{2}(2x- x_0)+\frac{ x_0}{2}\right)&=\frac{1}{2}u(2x- x_0)
\\
v\left(\frac{1}{2}( x_0-2x)+\frac{ y_0}{2}\right)&=\frac{1}{2}v( x_0-2x).
\end{align*}
Consequently, for every $x^\prime=x- x_0$ and $y^\prime=\frac{ x_0}{2}-\frac{ y_0}{2}-x$
\begin{align*}
u\left( x_0+x^\prime\right)&=\frac{1}{2}u( x_0+2x^\prime)
\\
v\left( y_0+y^\prime\right)&=\frac{1}{2}v( y_0+2y^\prime).
\end{align*}
Thus, $\textrm{supp }f\cap(z_0-\textrm{supp }g)=\frac{x_0}{2}+\frac{1}{2}(\textrm{supp }f\cap(-\textrm{supp }g))$ is a closed convex cone $x_0+C$ (with $C$ a cone with vertex at 0) and so
$$
\textrm{supp }f\cap(-\textrm{supp }g)=
x_0+C
$$
and for every $z\in\textrm{supp }f+\textrm{supp }g$
$$
\textrm{supp }f\cap(z-\textrm{supp }g)
=
\frac{x_z}{2}+\frac{x_0}{2}+C.
$$

Furthermore, $\textrm{supp }g\cap( z_0-\textrm{supp }f)= z_0-\textrm{supp }f\cap( z_0-\textrm{supp }g)= z_0- x_0-C $ is a closed convex cone $y_0+C^\prime$. This implies that $C^\prime=-C$.
Besides, if the vertex of the cone is unique, $z_0- x_0= y_0$ and then $ y_0=- x_0$ which implies  $ z_0=0$. If the vertex is not unique then, since both $ z_0- x_0$ and $ y_0$ are  vertices of the cone $ y_0-C$, also $ y_0+2( z_0- x_0- y_0)= y_0-2 z_0=- x_0$ and so $y_0-C=-x_0-C$ and $-x_0+ C= y_0+C$.

On the other hand, for any $z\in\textrm{supp }f+\textrm{supp }g$ if $x\in\textrm{supp }f\cap(z-\textrm{supp }g)$, then
$$
2x-x_z\in  x_0+C.
$$
If equality holds in (\ref{ineqinColesanti}) then $u$ is affine in any segment that connects $x_0+C$ with a point $x_z$ and $v$ is affine in any segment that connects $- x_0-C$ and some $y_z$.

Let us now take $z\in  x_0+\textrm{supp }g$. Then $- x_0+z\in\textrm{supp }g$ and, since $ x_0\in\textrm{supp }f$ we have that
$- x_0+z\in(z-\textrm{supp }f)\cap\textrm{supp }g=z-\textrm{supp }f\cap(z-\textrm{supp }g)=z-\frac{x_z}{2}-\frac{ x_0}{2}-C$ and so $\frac{x_z}{2}\in\frac{ x_0}{2}-C$ and
$$
x_z\in x_0-C.
$$
Consequently $x_z= x_0$. Otherwise, consider the ray from $x_z$ that passes through $ x_0$. Since $x_z\in  x_0-C$ any point $p$ in this ray such that the segment $[x_z,p]$ contains $ x_0$ is contained in $ x_0+C$ and since $u\geq 0$ is affine in the segment $[x_z,p]$ and $u( x_0)=0$ then $u=0$ for any such $p$, contradicting the integrability of $f$.

Thus, for any $z\in  x_0+\textrm{supp }g$ we have that $x_z= x_0$ and $y_z=2z- x_0$.
Notice that then for every $z\in x_0+\textrm{supp }g$ we have that
$$
\textrm{supp }f\cap(z-\textrm{supp }g)
=
\frac{x_z}{2}+\frac{ x_0}{2}+C
= x_0+C
$$
as $x_z= x_0$. Consequently, $\textrm{supp }f=\textrm{supp }f\cap(-\textrm{supp }g)$ since for every
$y\in \textrm{supp }f$, as $C$ is a full-dimensional cone, we can take $w\in C$ such that
$y\in- w+( x_0+C)$. Thus, if we take $z=- w\in-C= x_0-( x_0+C)\subset  x_0+\textrm{supp }g$,
we have that $y\in z+( x_0+C)\subset z-\textrm{supp }g$ and so $y\in\textrm{supp }f\cap(z-\textrm{supp }g)=\textrm{supp }f\cap(-\textrm{supp }g)$. Consequently $\textrm{supp }f= x_0+ C\subseteq-\textrm{supp }g$.

Analogously,
 take $z\in - x_0+\textrm{supp } f$. Since $- x_0\in\textrm{supp }g$, we have that $ x_0+z\in\textrm{supp }f\cap(z-\textrm{supp }g)=\frac{x_z}{2}+\frac{ x_0}{2}+C$ and, consequently $\frac{y_z}{2}\in-\frac{ x_0}{2}+C$.
 Thus $y_z\in - x_0+C= y_0+C$. Consequently $y_z= y_0$.
 Otherwise, consider the ray from $y_z$ that passes through $y_0$. Since $y_z\in  y_0+C$ any point $p$ in this ray such that the segment $[y_z,p]$ contains $ y_0$ is contained in $ y_0-C$ and since $v\geq 0$ is affine in the segment $[y_z,p]$ and $v( x_0)=0$ then $v=0$ for any such $p$, contradicting the integrability of $g$.

Thus, for any $z\in - x_0+\textrm{supp }f$ we have that $x_z=2z- y_0$ and $y_z= y_0$. Notice that then for every $z\in- x_0+\textrm{supp }f$ we have that
$$
\textrm{supp }f\cap(z-\textrm{supp }g)=
\frac{x_z}{2}+\frac{ x_0}{2}+C=z-\frac{ y_0}{2}+\frac{ x_0}{2}+C=z+ x_0+C,
$$
since $ x_0+ C=- y_0+C$. Consequently, $-\textrm{supp }g=\textrm{supp }f\cap(-\textrm{supp }g)$ since for every
$y\in -\textrm{supp }g$, as $C$ is a full-dimensional cone, we can take $w\in C$ such that
$y\in- w+( x_0+C)$. Thus, if we take $z= w\in C=- x_0+\textrm{supp }g$
we have that $y+z\in\textrm{supp }f\cap(z-\textrm{supp }g)=z+ x_0+C$ and so $y\in x_0+C=\textrm{supp }f$.

Consequently $-\textrm{supp }g\subseteq\textrm{supp }f$ and so $-\textrm{supp }g=\textrm{supp }f= x_0+C$.

Now let us see that $v$ is affine on $- x_0-C$. Let us take $x,y\in- x_0-C$ and consider $z=\frac{y}{2}+\frac{ x_0}{2}\in-C= x_0+\textrm{supp }g$.
Then, for this $z$, $y_z=y$ and since $x\in- x_0-C$, $v$ is affine in the segment that connects $x$ and $y$. Consequently, $v$ is affine on
$- x_0-C$ and so $g(x)=c_2e^{-\langle b,x\rangle}$ on $- x_0-C$. Thus, $C$ does not contain any straight line $l$ since otherwise, as $v$ is affine and positive, it must be constant on the line $l$ and so $C$ has only one vertex and $ x_0=- y_0$ and $\langle b,x\rangle<0$ for every $x\in C\backslash \{0\}$.

Analogously, $u$ is also affine on $ x_0+C$ since for any  $x,y\in x_0+C$, if we take $z=\frac{x}{2}-\frac{ x_0}{2}\in C=- x_0+\textrm{supp }f$ we have that for this $z$, $x_z=x- x_0- y_0=x$ and so, $u$ is affine on $ x_0+C$ and $f(x)=c_1e^{-\langle a, x\rangle}$ on $ x_0+C$.
Since $\Vert f\Vert_\infty=f( x_0)$ we have that $\langle a,x\rangle>0$ for every $x\in C$.

%On the other hand, for every $z\in C-C$ and every $x\in  x_0+C\cap (2z+C)$
%\begin{align*}
%\sqrt{f(x)g(2z-x)}
%&=\sqrt{c_1c_2}e^{-\langle a,\frac{x}{2}\rangle}e^{-\langle b,z-\frac{x}{2}\rangle}
%\\
%&=
%\sqrt{c_1c_2}e^{-\langle b,z\rangle}e^{-\langle a-b,\frac{x}{2}\rangle}.
%\end{align*}
%In particular for $z=0$ and $x\in  x_0+C$
%$$
%\sqrt{f(x)g(-x)}
%=
%\sqrt{c_1c_2}e^{-\langle a-b,\frac{x}{2}\rangle}
%$$
%and since the maximum of this is attained when $x= x_0$ we have that $\langle a-b,x\rangle>0$ for every $x\in C\backslash\{0\}$.

Finally, considering the section of $C$ by a hyperplane, since the intersection of this section with any of its translates is homothetic to itself,
 the section must be a simplex.
\end{proof}

\begin{rmk}
Theorem \ref{th_Colfg} becomes inequality \eqref{RSColesanti}
if $g(x)=\bar f(x)$ because $f\oplus g(z)=\Delta f(z)$. Moreover,
it also recovers (\ref{RSuniontwobodies}) when
we particularize
$f(x)=e^{-h_K(x)}, g(x)=e^{-h_L(-x)}$, where $h_K$ and $h_L$ are the
support functions of two convex bodies $K$ and $L$ that contain the origin.
Then
$f\oplus g(x)=e^{-h_{K\cap L}(x)}$ and $\sqrt{f(x)g(x)}=e^{-h_{\frac{K-L}{2}}(x)}$,
and since
\[
\int_{\R^n}e^{-h_K(x)}dx=n!|K^\circ|,
\]
then \eqref{eq:Colfg} becomes
$$
|(K\cap L)^\circ|\left|\left(\frac{K-L}{2}\right)^\circ\right|\leq
2^n|K^\circ||L^\circ|.
$$
From the characterization of the equality cases in (\ref{eq:Colfg}), this only converges to equality when we consider sequences of sets $(K_n^\circ)_n$ and $(-L_n^\circ)_n$ converging to simplices with 0 in one of the vertices and the same outer normal vectors at the facets that pass through the origin.

Taking into account that $(K\cap L)^\circ=\conv\{K^\circ,L^\circ\}$ and $\frac{K-L}{2}\subset
\conv\{K,-L\}$, if we change the role of $K^\circ$ and $L^\circ$ by $K$ and
$-L$ for simplicity, then
\[
|K\cap L||\conv\{K,-L\}|\leq
\left|\left(\frac{K^\circ-L^\circ}{2}\right)^\circ\right||\conv\{K,-L\}|\leq
2^n|K||L|,
\]
showing the assertion and slightly strengthening \eqref{RSuniontwobodies}. In order to have equality in \eqref{RSuniontwobodies} we must have $K$ and $L$ be simplices with 0 in one of the vertices and the same outer normal vectors at the facets that pass through the origin and $\frac{K+L}{2}=\conv\{K,L\}$. Thus $K=L$ is a simplex. Otherwise there exists a direction $\theta\in S^{n-1}$ such that $h_K(\theta)< h_L(\theta)$ (or $h_L(\theta)<h_K(\theta)$). Thus, $h_{\frac{K+L}{2}}(\theta)<h_L(\theta)\leq h_{\conv\{K,L\}}(\theta)$ (or $h_{\frac{K+L}{2}}(\theta)<h_K(\theta)\leq h_{\conv\{K,L\}}(\theta)$).
\end{rmk}

\centerline{ACKNOWLEDGEMENTS}

Part of this work was carried out at the `Instituto de Matem\'aticas de la Universidad de Sevilla' (IMUS)
where the D. Alonso was invited and B. Gonz\'alez fulfilled the program
`Ayudas para estancias cortas postdoctorales en el IMUS', and they are thankful for the invitation and for the good working conditions and environment there.

D. Alonso is partially supported by `Institut Universitari de
Matem\`atiques i Aplicacions de Castell\'o', Spanish Ministry of Sciences and Innovation (MICINN) project
MTM2013-42105-P and BANCAJA project P1-1B2014-35. %D, Alonso and B. Gonz\'alez are partially supported
%by `Programa de Ayudas a Grupos de Excelencia de la Regi\'on de
%Murcia', Fundaci\'on S\'eneca, 04540/GERM/06.
B. Gonz\'alez is
partially supported by Spanish Ministry of Economy and Competitiveness (MINECO) project MTM2012-34037.

C. H. Jim\'enez and R. Villa are supported by MINECO project MTM2012-30748 and C. H. Jim\'enez is also supported by Capes and IMPA.

\end{document}